\documentclass[a4paper]{amsart}

\usepackage[english]{babel}
\usepackage[initials]{amsrefs}
\usepackage{mathrsfs}
\usepackage{color}
\usepackage{comment}
\usepackage{tikz-cd}
\usepackage{amssymb}



\newcommand{\bbN}{{\mathbb N}}
\newcommand{\bbQ}{{\mathbb Q}}

\newcommand{\calG}{\mathcal{G}}
\newcommand{\calS}{\mathcal{S}}

\newcommand{\calP}{\mathcal{P}}

\newcommand{\calF}{\mathcal{F}}
\newcommand{\calC}{\mathcal{C}}
\newcommand{\calR}{\mathcal{R}}
\newcommand{\calQ}{\mathcal{Q}}
\newcommand{\calT}{\mathcal{T}}
\newcommand{\calA}{\mathcal{A}}

\newcommand{\calN}{\mathcal{N}}
\newcommand{\calD}{\mathcal{D}}

\newcommand{\bs}{\backslash}

\newcommand{\id}{\operatorname{id}}
\newcommand{\im}{\operatorname{im}}

\newcommand{\isom}{\operatorname{Isom}}
\newcommand{\map}{\operatorname{Map}}

\newcommand{\homeo}{\operatorname{Homeo}}
\newcommand{\aut}{\operatorname{Aut}}

\newcommand{\Hom}{\operatorname{Hom}}

\newcommand{\sk}{\operatorname{sk}}

\DeclareMathAlphabet{\matheurm}{U}{eur}{m}{n}

\newcommand{\compactopen}{\mathscr{CO}}




\newtheorem{theorem}{Theorem}[section]
\newtheorem{lemma}[theorem]{Lemma}

\newtheorem{cor}[theorem]{Corollary}

\newtheorem{prop}[theorem]{Proposition}
\theoremstyle{definition}

\newtheorem{defn}[theorem]{Definition}

\newtheorem{remark}[theorem]{Remark}

\numberwithin{equation}{section}

\begin{document}
\title[Topological models for tree almost automorphism groups]{Topological models of finite type for tree almost automorphism groups}
\author{Roman Sauer}
\address{Karlsruhe Institute of Technology, Karlsruhe, Germany}
\email{roman.sauer@kit.edu}
\author{Werner Thumann}
\address{Max Planck Institute for Mathematics, Bonn, Germany}
\email{thumann@math.uni-bonn.de}
\subjclass[2010]{Primary 55R35; Secondary 20E08, 22D05}
\keywords{Tree almost automorphism group, Neretin group, finiteness properties}

\begin{abstract}
We show that tree almost automorphism groups, including Neretin groups, satisfy the analogue of the $F_\infty$-finiteness condition 
in the world of totally disconnected groups: They possess a cellular action on a contractible cellular complex such that the stabilizers 
are open and compact and the restriction of the action on each $n$-skeleton is cocompact. 
\end{abstract}

\maketitle

\section{Introduction}
The Neretin groups $\calN_q$ -- introduced by Neretin as $p$-adic analogues of the diffeomorphism group of the circle -- 
are remarkable locally compact groups. They are simple, hence unimodular~\cite{kapoudjian}. But they do not possess any lattice~\cite{simple}
(the first simple such example), and they are compactly presented~\cite{boudec}. 

The Neretin groups lie in the larger family of \emph{tree almost automorphism groups} $\calA^D_{qr}$ 
(Definition~\ref{defn: tree almost automorphism groups}). It is $\calN_q\cong\calA^{\operatorname{Sym}(q)}_{q2}$. 
Compact presentability~\cite{boudec} of the groups $\calA^D_{qr}$ (denoted by $\operatorname{AAut}_D(\calT_{q,r})$ 
in~\cite{boudec}) also hold true here, and simplicity results were obtained in~\cite{caprace}. 

The Higman-Thompson groups $F_{qr}$ and $V_{qr}$ embed into $\calA^D_{qr}$ discretely and densely, respectively~(Remark \ref{rem:discreteanddense}).
For $q=2$ and $r=1$ these are the famous Thompson groups $F$ and $V$. Thompson's group $F$ was the first known torsionfree group that satisfies 
the finiteness condition of being of type $F_\infty$ and has infinite cohomological dimension~\cite{brown+geoghegan}. 

Our goal is to show that the tree almost automorphism groups $\calA^D_{qr}$ satisfy a similar $F_\infty$-property 
in the context of totally disconnected groups. 
\medskip

A discrete group~$\Gamma$ is said to be of \emph{type $F_\infty$} if it acts freely on a contractible CW-complex $X$ by cell permuting
homeomorphisms such that there are
only finitely many $\Gamma$-orbits of $n$-cells for every~$n$; in common terminology, recalled in 
Subsection~\ref{subsec: G-CW}, $X$ is a contractible 
free $\Gamma$-CW-complex of \emph{finite type}. Non-discrete totally disconnected 
groups cannot act freely on CW-complexes by cell permuting homeomorphisms. Thus we turn to an  
equivalent description of type $F_\infty$ of discrete groups which leads to the right notion in the 
totally disconnected setting. 

\begin{prop}[\cite{lueck-type}*{Lemma~4.1}]
A discrete group $\Gamma$ is of type $F_\infty$ if and only if 
there is a contractible 
proper $\Gamma$-CW-complex of finite type. 	
\end{prop}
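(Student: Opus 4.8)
The forward implication I would dispatch immediately from the definition. If $\Gamma$ is of type $F_\infty$, then by the description recalled above it acts freely on a contractible CW-complex $X$ of finite type by cell permuting homeomorphisms. A free action has trivial cell stabilizers, and trivial groups are in particular finite, so $X$ is automatically a contractible \emph{proper} $\Gamma$-CW-complex of finite type. Hence one direction costs nothing beyond the observation that ``free'' implies ``proper''.

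For the converse, which carries all the content, the plan is to invoke Brown's finiteness criterion. Let $X$ be a contractible proper $\Gamma$-CW-complex of finite type. I would verify the three hypotheses of the criterion in turn: contractibility of $X$ gives that $X$ is $(n-1)$-connected for every $n$; properness means every cell stabilizer $\Gamma_\sigma$ is finite and therefore of type $F_\infty$, in particular of type $F_{n-p}$ whenever $\sigma$ is a $p$-cell; and ``finite type'' means there are only finitely many $\Gamma$-orbits of cells in each dimension, so the action is cocompact on every skeleton. Brown's criterion then concludes that $\Gamma$ is of type $F_n$ for every $n$, that is, of type $F_\infty$. No barycentric subdivision is required here, since in a $\Gamma$-CW-complex the stabilizer of a cell fixes that cell pointwise by definition, so the admissibility hypothesis of the criterion holds on the nose.

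Should a self-contained argument be preferred, I would instead reason on the cellular chain complex. The complex $C_\ast(X)$ is a resolution of the trivial module $\bbZ$ by $\bbZ\Gamma$-modules, and finite type yields $C_p(X)\cong\bigoplus_i \bbZ[\Gamma/\Gamma_{\sigma_i}]$, a \emph{finite} direct sum in which each $\Gamma_{\sigma_i}$ is finite. Writing $\bbZ[\Gamma/H]\cong\mathrm{Ind}_H^\Gamma\bbZ$ for a finite subgroup $H$, and using that finite groups are of type $\mathrm{FP}_\infty$ together with the fact that induction from a subgroup preserves $\mathrm{FP}_\infty$, each $C_p(X)$ is of type $\mathrm{FP}_\infty$ over $\bbZ\Gamma$. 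Since $\bbZ$ admits a resolution all of whose terms are of type $\mathrm{FP}_\infty$, it is itself of type $\mathrm{FP}_\infty$, so $\Gamma$ is of type $\mathrm{FP}_\infty$. Finally, because $X$ is simply connected with finitely many $\Gamma$-orbits of $2$-cells and finite cell stabilizers, $\Gamma$ is finitely presented, and $\mathrm{FP}_\infty$ together with finite presentability is exactly type $F_\infty$.

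The genuine work, and the point I expect to be the main obstacle, is entirely concentrated in Brown's criterion (respectively, in its algebraic shadow): the closure of the class of $\mathrm{FP}_\infty$-modules under passage to a module admitting a resolution by $\mathrm{FP}_\infty$-modules, and the extraction of a finite presentation of $\Gamma$ from the $2$-skeleton of $X$. Everything else is bookkeeping once the finiteness of the stabilizers has been used to feed the finite groups' $F_\infty$-property into the inductive step.
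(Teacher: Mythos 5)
Your proposal is correct, but there is nothing in the paper to compare it against: the paper does not prove this proposition at all, it simply imports it from \cite{lueck-type}*{Lemma~4.1}. So your argument supplies a proof where the paper gives a citation, and the route you choose is the standard one. The forward direction is indeed the trivial observation that free implies proper, and the converse via Brown's finiteness criterion is sound on every point you check: contractibility gives $(n-1)$-connectivity for all $n$, finite stabilizers are of type $F_\infty$, finite type gives cocompact skeleta, and admissibility is automatic because in a $\Gamma$-CW-complex a cell that is stabilized setwise is fixed pointwise (this is the same observation as Remark~\ref{rem:gcwandcw} of the paper); this last point also validates your chain-level variant, since it rules out orientation twisting and gives the clean permutation-module description $C_p(X)\cong\bigoplus_i\bbZ[\Gamma/\Gamma_{\sigma_i}]$. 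The only step you should make explicit rather than leave implicit is the final identification: Brown's criterion (or your $\mathrm{FP}_\infty$ argument plus finite presentability) yields ``$\Gamma$ is of type $F_n$ for all $n$,'' whereas the paper's definition of type $F_\infty$ demands a \emph{free} contractible $\Gamma$-CW-complex of finite type; the bridge is Wall's standard theorem that a finitely presented $\mathrm{FP}_\infty$ group admits a $K(\Gamma,1)$ of finite type, whose universal cover is the required free complex. With that one sentence added, your proof is complete and self-contained modulo the two standard inputs you correctly isolate (Brown's criterion, respectively the closure of $\mathrm{FP}_\infty$ under resolutions by $\mathrm{FP}_\infty$-modules, and finite presentability from a cocompact proper action on a simply connected $2$-complex, which is exactly the discrete case of the paper's own Proposition~\ref{prop: compact presentation}).
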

 
Here, properness is equivalent to finiteness of stabilizers. For a totally disconnected group~$G$ we consider $G$-CW-complexes which are \emph{smooth}, 
i.e.~stabilizers are open, and \emph{proper}, i.e.~stabilizers are compact. 
A smooth $G$-CW-complex is, in particular, 
a CW-complex when discarding the group action 
(Remark~\ref{rem:gcwandcw}).  

\begin{defn}
Let $G$ be a totally disconnected group. A \emph{topological model} of $G$ is a contractible proper smooth $G$-CW-complex. 
We say that $G$ is of \emph{type $F_\infty$} if $G$ admits a topological model of finite type. 
\end{defn}

\begin{remark} 
Type $F_\infty$ implies being compactly generated and compactly presented (Proposition~\ref{prop: compact presentation}). 
\end{remark}

The theory of finiteness conditions for totally disconnected or locally compact groups is still less 
developed than the one for discrete groups. 
The notion of compact presentation was first introduced in 1964 by Kneser~\cite{kneser} which we learned from the 
forthcoming book~\cite{cornulier-delaharpe}. Abels and Tiemeyer 
introduced finiteness conditions for arbitrary locally compact groups in terms of group cohomology~\cite{at}. The relation 
of their property $C_\infty$ to the above $F_\infty$-property will be discussed in forthcoming work of the authors of this paper. 
A homological setup over the rationals for finiteness conditions of totally disconnected groups was recently developed in~\cite{weigel}. 

We are ready to state our main result which is proved in Subsection~\ref{subsec: conclusion}. It generalizes the result of Le Boudec~\cite{boudec} 
on compact presentability. 
 
\begin{theorem}\label{thm: main}
	The groups $\calA^D_{qr}$ are of type $F_\infty$. 
\end{theorem}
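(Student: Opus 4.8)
The plan is to build an explicit topological model $X$ of finite type for $G=\calA^D_{qr}$ that extends the Stein--Farley complex of the Higman--Thompson group $V_{qr}$ by remembering the profinite automorphism groups of the subtrees hanging below a partition. Recall that an element of $\calA^D_{qr}$ is represented by a $D$-colored tree isomorphism between the complements of two finite complete subtrees of $\calT_{q,r}$; equivalently, it is a homeomorphism of the boundary Cantor set $\partial\calT_{q,r}$ which, on a sufficiently fine partition into standard cylinders, is given cylinder by cylinder by a rooted $D$-colored tree isomorphism. I would take as vertices of $X$ the finite partitions $P$ of $\partial\calT_{q,r}$ into standard cylinders (equivalently, finite complete subforests of $\calT_{q,r}$) and assemble the higher cells from intervals in the refinement poset, following the standard interval/Stein--Farley recipe. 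Since $G$ sends cylinders to finite unions of cylinders, it permutes these partitions and acts cellularly on $X$; the dense embedding $V_{qr}\hookrightarrow\calA^D_{qr}$ of Remark~\ref{rem:discreteanddense} makes this action an honest extension of the $V_{qr}$-action on its Stein--Farley complex.

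The second step is to identify the cell stabilizers and thereby check smoothness and properness. The stabilizer in $G$ of a partition $P$ into $n$ cylinders consists of those almost automorphisms preserving $P$; after permuting the cylinders, such an element acts on each cylinder by a rooted $D$-colored tree isomorphism, so the stabilizer is assembled from copies of the profinite group $\aut^D(\calT_q)$ of $D$-respecting automorphisms of the regular rooted $q$-tree. These groups are \emph{compact}, and they are \emph{open} because truncated versions of them form a neighborhood basis of the identity in the topology of $G$. Hence every cell stabilizer is compact open, so $X$ is a proper smooth $G$-CW-complex and, by Remark~\ref{rem:gcwandcw}, a CW-complex after forgetting the action. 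For finite type one observes that an $n$-cell is determined by a refinement chain of bounded combinatorial complexity, that $G$ acts transitively on the cells of each fixed combinatorial type (this transitivity is exactly what the almost automorphism structure provides), and that there are only finitely many types in each dimension; thus there are finitely many $G$-orbits of $n$-cells for every $n$, i.e.\ the action is cocompact on each skeleton.

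It remains to prove that $X$ is contractible. Here I would run discrete Morse theory in the style of Bestvina--Brady and Brown--Stein: equip $X$ with the height function counting the cylinders (leaves) of a partition and analyze the descending links, which are complexes of elementary expansions and contractions. One shows these descending links are highly connected, with connectivity growing as a function of the height, typically by exhibiting an associated complex of basic subdivisions (or a matching-type complex) as Cohen--Macaulay. Filtering $X$ by height and sliding the connectivity through then forces $X$ to be contractible in the limit.

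The main obstacle is precisely this connectivity analysis of the descending links, carried out simultaneously with the bookkeeping of the profinite decorations so that the complex stays contractible, cocompact on skeleta, and equipped with compact-open stabilizers all at once. Once these properties are verified, $X$ is a topological model of finite type for $\calA^D_{qr}$, and Theorem~\ref{thm: main} follows.
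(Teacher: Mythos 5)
Your overall strategy (a Stein--Farley-type complex, height function, descending-link connectivity) is the same family of ideas the paper uses, but two steps of your implementation fail, and each failure points at exactly the extra structure the paper's construction is designed to supply.

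First, the group does not act on your vertex set, and even where the action makes sense your stabilizer computation is wrong. An almost automorphism carries a standard cylinder to a finite \emph{union} of cylinders, so the image of a partition into cylinders under $\gamma\in\calA^D_{qr}$ is in general not a partition into cylinders: ``$G$ sends cylinders to finite unions of cylinders'' does not give a permutation action on your partitions. Worse, for those $\gamma$ that do preserve a partition $P$ setwise, the restriction of $\gamma$ to a cylinder of $P$ is only a local similarity onto the target cylinder, \emph{not} a rooted tree isomorphism; the setwise stabilizer of $P$ therefore contains a product of full almost automorphism groups of the pieces and is far from compact. This is precisely why the paper does not use partitions: its vertices are classes $[\phi]$ of sphero-vertices $\phi\colon nB\rightarrow rB$ modulo strict transformations (Subsection~\ref{sub:posetdefs}), i.e.\ partitions together with a marking of each piece modulo $\isom_D(B)$, and it is this marking that cuts the stabilizer down to the compact open group $\phi\circ\isom_D(B)^n\circ\phi^{-1}$ (Proposition~\ref{prop:isotropy}). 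Your opening sentence promises to ``remember'' such profinite decorations, but the complex you then define discards them.

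Second, your finite-type claim is false, and repairing it is the main technical content of the paper. The number of pieces of a vertex (its level) is a $G$-invariant, so there is at least one $G$-orbit of vertices for every level: the $0$-skeleton of any complex of this kind has infinitely many orbits, and no skeleton is cocompact. Contractibility plus high connectivity of descending links does not by itself produce a finite-type model; you need a mechanism to convert the Morse filtration into one. The paper supplies this with the equivariant cell-trading theorem (Theorem~\ref{thm:cell_trading_filtration}), applied to the filtration $\calQ(k)$ by level, whose stages are of finite type (Proposition~\ref{prop:finitetype}). Moreover, in the totally disconnected setting that theorem requires the filtration to be $\compactopen$-highly connected, i.e.\ the connectivity statement must hold for the $H$-fixed point sets for \emph{all} compact open subgroups $H$, uniformly; the paper verifies this by showing that descending links in $\calQ^H$ coincide with descending links in $\calQ$ (Subsection~\ref{subsec: conclusion}, using Remark~\ref{rem:fixedpointset}). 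Your Morse-theoretic step addresses only plain (non-equivariant) connectivity, which in this setting is not sufficient even after the first gap is repaired.
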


The ideas in Subsections \ref{sub:posetdefs}, \ref{sub:posetprops} and \ref{sub:conn_desc_link} are inspired by the finiteness results for discrete
Thompson-like groups. An interesting feature of our approach is that we simultaneously prove finiteness results for non-discrete totally disconnected 
groups and discrete groups. Indeed, in the case $D=\{1\}$, $q=2$, $r=1$ Theorem \ref{thm: main} says that
Thompson's group $V$ is of type $F_\infty$, which is a well-known result of Brown~\cite{brown-finiteness}.

\begin{remark}\label{rem:nococompact}
There is no finite-dimensional topological model for $\calA^D_{qr}$. If there were such a topological model~$X$, then the Higman-Thompson group $F_{qr}$, which is discretely 
embedded into $\calA^D_{qr}$~(Remark \ref{rem:discreteanddense}), would act properly on a contractible finite-dimensional CW-complex by cell permuting homeomorphisms. 
Since one can compute the rational homology from $X$~\cite{brown}*{Exercise~2 on p.~174}, this would imply $H_k(F_{qr},\bbQ)=0$ for all $k$ large enough.
But this would contradict the homology computations for $F_{qr}$ in~\cite{stein}*{Theorem 4.7}. 
We refer also to~\cite{weigel}*{Remark~3.11} and~\cite{caprace}*{Remark~6.16} for 
similar statements. 
\end{remark}

\begin{remark}
Other topological models for Neretin groups which are \emph{not} of finite type are 
considered in~\cite{kapoudjian-moduli}.
\end{remark}

\begin{remark} 
	Our main theorem answers a question of Castellano and Weigel~\cite{weigel}*{Question~3} in the positive where finiteness conditions are asked for Neretin groups. 
\end{remark}

One might ask whether $\calA^D_{qr}$ admits a classifying space 
of finite type with respect to the family $\compactopen$ of compact-open subgroups. 
Let us first recall this notion. 

\begin{defn}[\cite{dieck}]
	A (model of the) \emph{classifying space} of $G$ with respect to a family $\calF$ of subgroups of $G$ is a $G$-CW-complex 
	$X$ such that each isotropy group lies in $\calF$ and such that for each $H\in \calF$ the $H$-fixed point set $X^H$ is non-empty and contractible.
	We denote any such model by $E(G,\calF)$. 
\end{defn}

A reductive group $G$ over a local field admits a cocompact model of $E(G,\compactopen)$; it is given by its 
Bruhat-Tits building~\cite{lueck-survey}*{Example~4.14}. 

\begin{remark}\label{rem:noclassifying}
The group $G=\calA^D_{qr}$ does \emph{not} admit a model of $E(G,\compactopen)$ 
of finite type. The stabilizer of a point is always subconjugated to 
the stabilizer of a vertex. Hence, if there was a model with cocompact $0$-skeleton, 
then every compact-open subgroup of $G$ would be subconjugated to one of the 
finitely many representatives of conjugacy classes of stabilizers of vertices. 
By unimodularity there would be a finite upper bound on the Haar measure of 
compact-open subgroups of $G$. But this cannot be true due to Remark~\ref{rem: increasing compact-open}.  

Since a topological model of $G$ that is a CAT(0)-cell complex would be automatically a model 
of $E(G,\compactopen)$, any topological model of $G$ with the structure of a CAT(0)-cell complex cannot be cocompact nor of finite type. 
This was already observed in the paper~\cite{caprace}*{Remark~6.16} by Caprace and De Medts.
\end{remark}

Remarks \ref{rem:nococompact} and \ref{rem:noclassifying} shows that Theorem~\ref{thm: main} is the best we can expect in terms of 
finiteness properties for the groups $\calA^D_{qr}$.

\subsection{Acknowledgments}

The initial phase of this project was supported by DFG grant 1661/3-2.
Moreover, the second author wants to thank Clara L\"oh for her hospitality at the University of Regensburg 
where the first part of this project was carried out. The second author's contribution to this project in the 
final phase was carried out at the Max Planck Institute for Mathematics in Bonn.

Both authors want to thank Pierre-Emmanuel Caprace, Yves de Cornulier and Adrien Le Boudec for their helpful comments
on a draft of this paper.

\section{Notions and methods from equivariant topology}

\subsection{Equivariant CW-complexes}\label{subsec: G-CW}

We recall the notion of a $G$-CW-complex as in \cite{dieck}*{II.1} or \cite{lueck-lecturenotes}*{I.1}.   

\begin{defn}
Let $G$ be a topological group. A \emph{$G$-CW-complex} is a $G$-space $X$ endowed with a filtration 
by $G$-subspaces, called \emph{skeleta}, 
\[X^{(0)}\subset X^{(1)}\subset\ldots \subset X=\bigcup_{n\geq 0} X^{(n)}\]
such that $X$ carries the colimit topology, the $0$-skeleton $X^{(0)}$ is $\bigsqcup_{i\in I_0}G/H_i$ and for any $n\geq 1$,
the $n$-skeleton is obtained from the $(n-1)$-skeleton by 
a pushout in the category of $G$-spaces 
\begin{equation*}
	\begin{tikzcd}
		\bigsqcup_{i\in I_n}G/H_i\times S^{n-1}\arrow[hook]{d}\arrow{r} & X^{(n-1)}\arrow[hook]{d}\\
		\bigsqcup_{i\in I_n}G/H_i\times D^n\arrow{r}     & X^{(n)}
	\end{tikzcd}
\end{equation*}
The $H_i$ are closed subgroups of $G$. Conjugates of the groups $H_i$ are called 
\emph{isotropy groups} of $X$. A $G$-CW-complex is called 
\emph{proper} or \emph{smooth} if each $H_i$ is compact or open, respectively. 
It is called \emph{finite} if $\bigsqcup_nI_n$ is finite and of \emph{finite type} if $I_n$ is finite for each $n$. The latter is equivalent to the quotient 
space of every skeleton being compact. 
\end{defn}

\begin{remark}\label{rem:gcwandcw}
	A smooth $G$-CW-complex is, forgetting the group action, a CW-complex. This follows from $G/H$ being discrete whenever $H$ is
	an open subgroup. Furthermore, the action of $G$ on this CW-complex is continuous and by cell permuting homeomorphisms. An element in $G$ 
	fixing a cell setwise fixes it already pointwise. The stabilizers of cells are the isotropy groups of the $G$-CW-complex. 
	
	Vice versa, assume that $X$ is a CW-complex with an action of $G$ with the following properties:
	\begin{itemize}
		\item The action is continuous.
		\item The action is by cell permuting homeomorphisms.
		\item An element in $G$ fixing a cell setwise fixes it already pointwise.
		\item The cell stabilizers are open.
	\end{itemize}
	Then $X$ is a smooth $G$-CW-complex with isotropy groups being the cell stabilizers. Hence, if additionally the cell stabilizers
	are compact, then $X$ is a proper smooth $G$-CW-complex (cf.~\cite{dieck}*{Proposition II.1.15}). 
\end{remark}

\begin{remark}\label{rem: discretized CW}
Let $G$ be a topological group. Let $G^\delta$ be $G$ as an abstract group 
but endowed with the discrete topology. 
Every smooth $G$-CW-complex $X$ becomes 
a $G^\delta$-CW-complex since $G/H$ is canonically homeomorphic to $G^\delta/H$ for every open subgroup $H\leq G$.
Vice versa, if $X$ is a $G^\delta$-CW-complex such that the isotropy groups are open in $G$, then $X$ is also a smooth $G$-CW-complex.
\end{remark}

\begin{remark}\label{rem: remark about topology}
Because it is a standing assumption in our background reference~\cite{lueck-lecturenotes}*{Convention~1.1 on p.~6} we will assume that topological groups $G$ are 
compactly generated as spaces in Steenrod's sense~\cite{steenrod}. This is satisfied if $G$ is a locally compact group. The groups of interest to us, namely 
tree almost automorphism groups, are locally compact. If we restrict our attention to smooth $G$-CW-complexes, then the discussion in Subsections
\ref{sub:celltrading} and \ref{sub:celltradefilt} would still be valid for arbitrary topological groups because of Remark~\ref{rem: discretized CW}. 
\end{remark}

\begin{prop}\label{prop: compact presentation}
	Let $X$ be a proper smooth $G$-CW-complex. If $X$ is connected and has a finite $1$-skeleton, then $G$ is 
	compactly generated. If, in addition, $X$ is simply connected and has a finite $2$-skeleton, then 
	$G$ is compactly presented. 
\end{prop}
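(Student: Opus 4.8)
The plan is to deduce the two finiteness properties combinatorially from the action on the low-dimensional skeleta, in the spirit of the Milnor--\v{S}varc lemma and of K.~S.~Brown's presentation theorem for groups acting on simply connected complexes. Attaching cells of dimension $\geq 2$ does not change $\pi_0$ and attaching cells of dimension $\geq 3$ does not change $\pi_1$, so the hypotheses yield a connected graph $Y \defq X^{(1)}$ in the first part and a simply connected $2$-complex $X^{(2)}$ in the second, each with only finitely many $G$-orbits of cells and with all cell stabilizers compact and open. Smoothness guarantees that $Y$ is an honest graph and that $G\backslash Y$ is a finite graph, while properness guarantees that every transporter $\{g \in G : g\sigma = \tau\}$ between two cells is either empty or a coset of the compact stabilizer $G_\sigma$, and hence compact. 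I fix throughout a base vertex $v_0$ with its compact-open stabilizer $H \defq G_{v_0}$.

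For compact generation I would apply the standard fact that a group acting on a connected graph is generated by the vertex stabilizers of a lifted spanning tree together with one group element for each quotient edge outside the tree; I apply this to the abstract group $G^\delta$ acting on $Y$ via Remark~\ref{rem: discretized CW}. Concretely, fix a spanning tree $\bar T$ of the finite graph $G\backslash Y$ and a lift $\widetilde T\subseteq Y$; then $G$ is generated by the finitely many compact stabilizers $G_v$ with $v\in\widetilde T^{(0)}$, together with finitely many elements $g_\epsilon$, one for each edge of $G\backslash Y$ outside $\bar T$. The union $K$ of these stabilizers and these elements is a finite union of compact sets and finite sets, hence compact, and it generates $G$; all the data involved are finite because $G\backslash Y$ is. Therefore $G$ is compactly generated.

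For compact presentation I would run the analogue of Brown's presentation theorem for the simply connected complex $X^{(2)}$. With $K$ as above as the compact generating set, simple connectivity of $X^{(2)}$ means that every edge loop at $v_0$ is filled by the finitely many $G$-orbits of $2$-cells, so that the kernel of the surjection $F(K) \to G$ is the normal closure of three families of relators: the multiplication-table relators $s\,t\,(st)^{-1}$, of length three, for $s,t$ ranging over each of the finitely many compact vertex stabilizers (a compact group is compactly presented by its multiplication table); the relators recording the two inclusions of each of the finitely many edge stabilizers into the incident vertex stabilizers; and one relator per $G$-orbit of $2$-cells, read off its attaching map. All of these have word length in $K$ bounded by a constant depending only on the finite $2$-skeleton, so $G$ is compactly presented. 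As an alternative to the combinatorial argument, both conclusions also follow from the Milnor--\v{S}varc lemma for locally compact groups~\cite{cornulier-delaharpe}, once one checks that the continuous action of $G$ on $Y$, respectively on $X^{(2)}$, is metrically proper and cocompact for the graph metric and that connectivity, respectively simple connectivity, supplies the needed coarse connectivity, respectively coarse simple connectivity.

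The step I expect to be the main obstacle is the relation-length bookkeeping in the presentation half: matching the topological definition of compact presentation, which demands relators of uniformly bounded word length in a single compact generating set, with the combinatorial relators produced by the complex-of-groups structure. In particular one must arrange the compact generating set $K$ to contain the vertex stabilizers outright, so that their defining relations may be taken to be the length-three multiplication-table relators, and then verify that the edge-inclusion relators and the $2$-cell relators are likewise of bounded length in $K$. By contrast the reduction to the skeleta and the generation argument are routine.
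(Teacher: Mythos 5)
Your proposal is correct, but it takes a genuinely different route from the paper. The paper handles both claims with a single tool, Lemma~\ref{lem: finite presentation from group actions} (Bridson--Haefliger): cocompactness of the $G$-action on the finite skeleton $X^{(1)}$ resp.\ $X^{(2)}$ gives a relatively compact, path-connected open $U$ with $G\cdot U$ equal to that skeleton, and the lemma then presents $G$ on $S=\{g\in G\mid gU\cap U\neq\emptyset\}$ with relators of length three; the only remaining point is that $S$ is relatively compact, which holds because compact point stabilizers make the action on the skeleton proper \cite{lueck-lecturenotes}*{Theorem~1.23 on p.~18}. In other words, the relation-length bookkeeping you single out as the main obstacle is precisely what the paper's choice of lemma eliminates: its relators have length $3$ by construction. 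Your route --- lifted spanning tree for compact generation, then Brown's presentation theorem on $X^{(2)}$ with multiplication-table, edge-inclusion and $2$-cell relator families --- is sound, and it yields strictly more information (an explicit presentation of $G$ as the fundamental group of a finite complex of compact open cell stabilizers), at the cost of invoking Brown's theorem and carrying out the length estimates. One detail to make explicit if you write this up: Brown's theorem presents $G$ as a quotient of the free product $\ast_v G_v \ast F(E)$, not of the free group on the subset $K=\bigcup_v G_v\cup\{g_\epsilon\}\subset G$, so you must push the relators forward along the natural surjection $F\big(\bigsqcup_v G_v\sqcup E\big)\to F(K)$; this is harmless, since under a surjection of free groups compatible with the maps to $G$ the image of a normal generating set of the kernel normally generates the kernel downstairs, and the lengths do not increase. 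Finally, your Milnor--\v{S}varc alternative via \cite{cornulier-delaharpe} is essentially how that reference proves its Theorem~8.A.9, which the paper cites as the already-known form of this proposition.
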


The above proposition is standard in the discrete setting and also not new in the locally compact setting (cf.~\cite{cornulier-delaharpe}*{Theorem~8.A.9}). 
For convenience of the reader, we point out the short argument based on the well-known lemma below. 
The lemma does not require that 
the group $G$ is countable or discrete. Hence we can apply it to 
a topological group regarded as an abstract group. 

\begin{lemma}[\cite{bridson}*{Theorem~8.10 on p.~135}]\label{lem: finite presentation from group actions}
Let $X$ be a topological space, let $G$ be an (abstract) group acting on $X$ by homeomorphisms, and let $U\subset X$ be an 
open subset such that $G\cdot U=X$. 
\begin{enumerate}
\item If $X$ is connected, then $S=\{g\in G\mid gU\cap U\ne\emptyset \}$ generates~$G$. 
\item Let $A_S$ be a set of symbols $a_s$ indexed by $S$. If $X$ and $U$ are both path-connected and $X$ is simply connected, then $G=\langle A_S\mid R\rangle$ where 
\[ R=\{a_{s_1}a_{s_2}a_{s_3}^{-1}\mid s_i\in S,~U\cap s_1U\cap s_3U\ne\emptyset,~s_1s_2=s_3 \text{ in $G$}\}.\]
\end{enumerate}
\end{lemma}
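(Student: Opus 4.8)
For part (1), I would let $H=\langle S\rangle\le G$ and compare the two unions of translates $V=H\cdot U=\bigcup_{h\in H}hU$ and $W=\bigcup_{g\notin H}gU$. Both are open because $G$ acts by homeomorphisms, and $V\cup W=G\cdot U=X$. The key point is that they are disjoint: if $hU\cap gU\ne\emptyset$ with $h\in H$ and $g\notin H$, then $h^{-1}gU\cap U\ne\emptyset$, so $h^{-1}g\in S\subseteq H$ and hence $g\in H$, a contradiction. Since $U\ne\emptyset$ (otherwise $X=\emptyset$ and there is nothing to prove) we have $U\subseteq V$, so $V\ne\emptyset$; connectedness of $X$ then forces $W=\emptyset$, and as every translate $gU$ is nonempty this means there is no $g\notin H$, i.e.\ $H=G$.

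For part (2), let $\Gamma=\langle A_S\mid R\rangle$ and let $\phi\colon\Gamma\to G$ be the homomorphism with $\phi(a_s)=s$; it is well defined because each relator $a_{s_1}a_{s_2}a_{s_3}^{-1}$ maps to $s_1s_2s_3^{-1}=1$ in $G$, and it is surjective by part~(1). It remains to show that $\phi$ is injective, which I would do by a monodromy construction. Fix a basepoint $x_0\in U$. For a loop $\omega$ at $x_0$, compactness of $[0,1]$ and the fact that $\{gU\}_{g\in G}$ is an open cover give a subdivision $0=t_0<\dots<t_n=1$ and elements $g_i\in G$ with $\omega([t_{i-1},t_i])\subseteq g_iU$, which I arrange so that $g_1=g_n=1$ (possible since $\omega(0)=\omega(1)=x_0\in U$ and $U$ is open). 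At each interior node $\omega(t_i)\in g_iU\cap g_{i+1}U$ yields $s_i\defq g_i^{-1}g_{i+1}\in S$, and I set $\Phi(\omega)\defq a_{s_1}\cdots a_{s_{n-1}}\in\Gamma$. Since $\phi(\Phi(\omega))=g_1^{-1}g_n=1$, the element $\Phi(\omega)$ lies in $\ker\phi$.

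The heart of the argument is that $\Phi$ is well defined and homotopy invariant. The relation $a_1a_1=a_1$ (from $1\cdot1=1$ and $U\cap U\cap U\ne\emptyset$) gives $a_1=1$ in $\Gamma$, so inserting a node without changing the covering translate leaves $\Phi$ unchanged. Re-choosing some $g_i$ by another translate containing the same subpath replaces two consecutive letters $a_sa_{s'}$ by a single $a_{s''}$ with $ss'=s''$; this is one of the relators of $R$ precisely when the triple overlap $U\cap sU\cap s''U$ is nonempty, which holds because all three translates contain a common point of $\omega$, and path-connectedness of $U$ lets me realize the comparison by paths inside single translates. Applying these elementary moves to a sufficiently fine subdivision of the unit square shows that $\Phi$ is unchanged under a homotopy rel endpoints. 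Since $X$ is simply connected, every loop at $x_0$ is null-homotopic and $\Phi$ of the constant loop is the empty word, whence $\Phi(\omega)=1$ for every loop $\omega$.

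Finally, to deduce injectivity take $\gamma=a_{s_1}\cdots a_{s_m}\in\ker\phi$, so $s_1\cdots s_m=1$ in $G$; stringing together paths inside the translates $U,\,s_1U,\,s_1s_2U,\dots$ (using path-connectedness of $U$ and of the overlaps forced by $s_i\in S$) yields a loop $\omega$ at $x_0$ whose associated word is exactly $s_1,\dots,s_m$, so that $\Phi(\omega)=\gamma$ by construction while $\Phi(\omega)=1$ by the previous paragraph; hence $\gamma=1$, $\ker\phi$ is trivial, and together with surjectivity this gives $G\cong\langle A_S\mid R\rangle$. The main obstacle I expect is precisely the well-definedness of $\Phi$: verifying that refining the subdivision, re-choosing covering translates, and passing to a homotopic loop each alter the word only through the relators of $R$. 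This bookkeeping is where the triple-intersection condition $U\cap s_1U\cap s_3U\ne\emptyset$ and the path-connectedness hypotheses on $X$ and $U$ are essential, and it is the only genuinely delicate point; the rest is formal.
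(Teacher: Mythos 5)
The paper contains no proof of this lemma to compare against: it is quoted verbatim from Bridson--Haefliger (Theorem~I.8.10, a result going back to Macbeath), so your attempt can only be measured against that source. Your argument is correct, and it takes a genuinely different route. Part~(1) is the same standard open--closed decomposition in both. For part~(2), Bridson--Haefliger construct an auxiliary space $\widetilde{X}=(\Gamma\times U)/{\sim}$ with $\Gamma=\langle A_S\mid R\rangle$, show that the natural map $\widetilde{X}\to X$ is an open surjective local homeomorphism, that $\widetilde{X}$ is connected (this is where path-connectedness of $U$ enters for them), and conclude from simple connectedness of $X$ that the map is a homeomorphism, forcing $\ker(\Gamma\to G)$ to be trivial; the relator bookkeeping is absorbed into checking that $\sim$ is an equivalence relation compatible with the projection. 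You instead run a combinatorial monodromy on loops: subdivide, read off a word in $\Gamma$, and verify invariance under refinement, translate-switching, and homotopy. Your verification of the elementary moves is the right one: the relator $a_{s_1}a_{s_2}a_{s_3}^{-1}$ with $U\cap s_1U\cap s_3U\ne\emptyset$ is exactly what the translate-switch and merge moves produce (the triple overlap contains a point of the loop, translated back to the base copy of $U$), and $a_1=1$ handles refinements. Two caveats: path-connectedness of $U$ is not really what powers the elementary moves (they need only the triple overlaps); it is used where you invoke it at the end, to assemble a loop realizing a prescribed kernel element, and to connect initial and terminal segments to the basepoint. And the homotopy-invariance step via a fine grid on the square, which you correctly flag as the only delicate point, remains a sketch in your write-up; it is standard, but it is precisely the step that the covering-space construction of Bridson--Haefliger replaces by the abstract fact that a connected covering of a simply connected space is trivial --- that is what their less elementary construction buys.
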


\begin{proof}[Proof of Proposition~\ref{prop: compact presentation}]
Let $X$ be a connected proper smooth $G$-CW-complex with finite $1$-skeleton~$X^{(1)}$. Since $G$ acts cocompactly on 
$X^{(1)}$ we can find an open, path-connected, relatively compact $U\subset X^{(1)}$ such that $G\cdot U=X^{(1)}$. If, in addition, $X^{(2)}$ is finite, 
we can find an open, path-connected, relatively compact $U\subset X^{(2)}$ such that $G\cdot U=X^{(2)}$. Since the relations in~$R$ are of length~$3$, 
we only have to show that the set $S$ in the statement is relatively compact. But this is clear since the fact that point stabilizers are compact 
implies that the $G$-action on $X^{(1)}$ resp.~$X^{(2)}$ is proper~\cite{lueck-lecturenotes}*{Theorem~1.23 on p.~18}. 
\end{proof}

\subsection{Equivariant cell trading}\label{sub:celltrading}

We will introduce an equivariant version of cell trading~\cite{geo}*{Proposition 4.2.1}. 
For this, we will frequently use the following fact:

\begin{remark}
	Let $X$ be a space, $Y$ a $G$-space and $H\leq G$ a closed subgroup. Then
	\begin{equation}\label{eq: adjunction}
\chi\colon \map(X,Y^H)\rightarrow \map^G(G/H\times X,Y)\hspace{8mm}f\mapsto\big[(gH,x)\mapsto g\cdot f(x)\big]
\end{equation}
	is a bijection with inverse given by $f\mapsto[x\mapsto f(eH,x)]$.
\end{remark}


Now let $(X,A)$ be a pair of $G$-CW-complexes with isotropy groups lying in the family $\calF$ which is $(\calF,n)$-connected in the following sense. 
\begin{defn}
	Let $\calF$ be a family of subgroups of $G$. We say that a pair $(X,A)$ of $G$-CW-complexes is \emph{$(\calF,n)$-connected}
	if for each $H\in\calF$ the pair $(X^H, A^H)$ is $n$-connected. Recall that a pair of spaces $(Y,B)$ is called $n$-connected if
	each path-connected component of $Y$ meets $B$ and for each $x_0\in B$ and $1\leq k\leq n$ we have $\pi_k(Y,B,x_0)=0$.
\end{defn}

Let $e$ be an equivariant $n$-cell in $X$ attached to $A$. We want to explain how we can
trade this equivariant cell with an equivariant $(n+2)$-cell of the same isotropy. More precisely, we will construct a pair $(X^t,A)$ of $G$-CW-complexes
with isotropy in $\calF$ which is $G$-homotopy equivalent to $(X,A)$ and such that $X^t$ is obtained from $X$
by deleting $e$ and adding another equivariant $(n+2)$-cell with the same isotropy. 
	
In the sequel we will freely use the adjunction~\eqref{eq: adjunction}. A 
map to $H$-fixed points will be denoted by lower case letters, and its image 
under $\chi$ by the corresponding upper case letter. 
	
The equivariant cell $e$ corresponds to a commutative 
diagram of $G$-maps
\begin{equation*}
	\begin{tikzcd}
		G/H\times S^{n-1}\arrow[hook]{d}\arrow{r}{F_0}& A\arrow[hook]{d}\\
		G/H\times D^n\arrow{r}{G_0} & X
	\end{tikzcd}
\end{equation*}
Since the pair $(X^H,A^H)$ is assumed to be $n$-connected, there is homotopy of $g_0\colon D^n\to X^H$ 
to a map into $A^H$ relative to $f_0$. 
Hence there are maps $g_1\colon D^n\to A^H$ and $g_2\colon D^{n+1}\to X^H$ such 
that $g_1\vert_{S^{n-1}}=f_0$, such that $g_2$ restricted to the upper hemisphere of $\partial D^{n+1}=S^n=D^n\cup_{S^{n-1}}D^n$ is $g_0$
and such that $g_2$ restricted to the lower hemisphere of $\partial D^{n+1}$ is $g_1$. 
By cellular approximation~\cite{bredon}*{Lemma~11.2 on p.~207} 
we may assume that $g_1$ lands in the $n$-skeleton and $g_2$ lands in the $(n+1)$-skeleton. 

For $n\ge 0$ we endow $(D^{n+2}, D^{n+1})$ with the 
following CW-structure: We have one $0$-cell lying on the equator $S^n\subset D^{n+2}$. The $n$-skeleton is the equator itself, obtained
from the $0$-cell by attaching one $n$-cell trivially. Now we attach two $(n+1)$-cells to the $n$-skeleton via the identity $S^n\rightarrow S^n$ as
gluing maps. These two $(n+1)$-cells form the upper and lower hemisphere of $\partial D^{n+2}=S^{n+1}$. We identify the second space in the pair
$(D^{n+2}, D^{n+1})$ with the lower hemisphere. Finally, we attach one $(n+2)$-cell via the identity $S^{n+1}\rightarrow S^{n+1}$ as gluing map
to obtain $D^{n+2}$.

We define $X^e$ as the \emph{elementary expansion}~\cite{lueck-lecturenotes}*{p.~61/62} of $X$ along~$G_2$, 
i.e.~$X^e$ is the following $G$-pushout: 
\begin{equation*}\label{eq: bigger elementary expansion}
	\begin{tikzcd}
		G/H\times D^{n+1}\arrow[hook]{d}\arrow{r}{G_2} & X\arrow[hook]{d}{\simeq} \\
		G/H\times D^{n+2}\arrow{r} & X^e	
	\end{tikzcd}
\end{equation*}
The space $X^e$ is obtained from $X$ by attaching an equivariant 
$(n+1)$-cell corresponding to the upper hemisphere of $S^{n+1}=\partial D^{n+2}$ and an $(n+2)$-cell which defines a $G$-CW-structure on 
$X^e$ (cf.~the remarks in~\cite{lueck-lecturenotes}*{p.~62}). The inclusion $X\subset X^e$ is a 
$G$-homotopy equivalence, even a 
strong $G$-deformation retraction, and the retraction map $p_X\colon X^e\to X$ is cellular~\cite{lueck-lecturenotes}*{p.~62}. 

Similarly, we define $A^e$ as the elementary expansion of 
$A$ along $G_1$ and the corresponding cellular retraction $p_A\colon A^e\to A$: 
\begin{equation*}\label{eq: smaller elementary expansion}
	\begin{tikzcd}
		G/H\times D^n\arrow[hook]{d}\arrow{r}{G_1} & A\arrow[hook]{d}{\simeq}\\
		G/H\times D^{n+1}\arrow{r} & A^e	
	\end{tikzcd}
\end{equation*}
Similarly as above, $A^e$ is obtained from $A$ by attaching an equivariant $n$-cell and an equivariant $(n+1)$-cell. The map $p_A$ 
pushes the $n$-cell and the $(n+1)$-cell into $A$. 
Observe the commutative diagram
\begin{equation*}
	\begin{tikzcd}
		G/H\times D^{n+1}\arrow[hook]{d}{up}\arrow[hookleftarrow]{r}{lo}& G/H\times D^n\arrow[hook]{d}{lo}\arrow{r}{G_1} & A\arrow[hook]{d}\\
		G/H\times D^{n+2}\arrow[hookleftarrow]{r}{lo}& G/H\times D^{n+1}\arrow{r}{G_2} & X 
	\end{tikzcd}
\end{equation*}
where $up$ denotes inclusion into the upper hemisphere and $lo$ denotes inclusion into the lower
hemisphere. This diagram induces a cellular inclusion $A^e\hookrightarrow X^e$ of the pushouts of the rows so that
\begin{equation}\label{eq: compatibility of expansions}
	\begin{tikzcd}
	        A\arrow[hook]{r}\arrow[hook]{d} & A^e\arrow[hook]{d}\\
	        X\arrow[hook]{r} & X^e	
	\end{tikzcd}
\end{equation}
commutes. We define $X^t$ by the $G$-pushout: 
\begin{equation}\label{eq: traded complex}
	\begin{tikzcd}
		A^e\arrow[hook]{d}\arrow{r}{p_A} & A\arrow{d}\\
		X^e\arrow{r} & X^t
	\end{tikzcd}
\end{equation}
From that pushout, $X^t$ inherits the structure of a $G$-CW-complex 
such that the right vertical map is a cellular inclusion~\cite{lueck-lecturenotes}*{1.29~on p.~21}. 
The $n$-cell $e$ gets deleted in $X^t$ and so $X^t$ has an equivariant $n$-cell less than $X$ but gains an additional 
equivariant $(n+2)$-cell in exchange. Further, $X^t$ and $X$ have the same number of equivariant $k$-cells for $n\neq k\neq n+2$.

Since $A^e\hookrightarrow X^e$ is a 
$G$-cofibration~\cite{lueck-lecturenotes}*{1.5~on p.~8} and 
$p_A$ is a $G$-homotopy equivalence, also $X^e\rightarrow X^t$ is a $G$-homotopy 
equivalence. This is a special case of~\cite{lueck-lecturenotes}*{Lemma~2.13 on p.~38} where one takes as lower row 
$X^e\leftarrow A^e\xrightarrow{p_A} A^e$ whose pushout is $X^t$ and as upper row $X^e\leftarrow A^e\xrightarrow{\id} A^e$ whose pushout is $X^e$; 
the maps from the upper to the lower row are $\id_{X^e}$, $\id_{A^e}$, and $p_A$. By putting the 
squares in~\eqref{eq: compatibility of expansions} and~\eqref{eq: traded complex} 
next to each other we obtain a bigger commutative diagram whose bottom row yields a $G$-homotopy equivalence $X\to X^t$ which is compatible with the two embeddings of~$A$, i.e.~we have a commutative triangle: 
\begin{equation*}\label{eq: relativity of cell trading}
	\begin{tikzcd}
		X\arrow{rr}{\simeq}\arrow[hookleftarrow]{dr} & & X^t\\
		& A\arrow[hook]{ur} & 
	\end{tikzcd}
\end{equation*}
We say that \emph{$X^t$ results from trading the equivariant cell $e$ in $X$ (relative to~$A$)}. 

\subsection{Cell trading on a filtration}\label{sub:celltradefilt}

Let $X$ be a $G$-CW-complex with isotropy lying in the family $\calF$. Consider a filtration $X_0\subset X_1\subset\ldots\subset X$ 
of $X$ by $G$-subcomplexes with $\bigcup_{i\geq 0} X_i=X$. We define this filtration to be \emph{$\calF$-highly connected} if
the $\calF$-connectivity of the pairs $(X_{i+1},X_i)$ tends to infinity as $i\rightarrow\infty$, i.e.
\[\forall_k\exists_n\forall_{i\geq n}\ (X_{i+1},X_i)\textnormal{ is $(\calF,k)$-connected}.\]
We say that this filtration is of \emph{finite type} if each $X_i$ is of finite type in the equivariant sense.

\begin{theorem}\label{thm:cell_trading_filtration}
	Let $X$ be a $G$-CW-complex with isotropy in $\calF$ and with a $\calF$-highly connected finite type filtration $(X_i)_{i\geq 0}$. 
	Then $X$ is $G$-homotopy equivalent to a finite type $G$-CW-complex with isotropy in $\calF$.
\end{theorem}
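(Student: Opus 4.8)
The plan is to build the finite type complex by an inductive cell-trading argument along the filtration $(X_i)_{i\geq 0}$, using Theorem from Subsection~\ref{sub:celltrading} as the single elementary move. First I would fix a target dimension $N$ and show how to construct a finite type $G$-CW-complex $G$-homotopy equivalent to $X$ \emph{through dimension $N$}; since the result is about a single complex $G$-homotopy equivalent to $X$, I then need to let $N\to\infty$ in a controlled way so that the constructions stabilize in each fixed degree. The key structural input is that $\calF$-high connectivity means: for every $k$ there is an index $n(k)$ such that for all $i\geq n(k)$ the pair $(X_{i+1},X_i)$ is $(\calF,k)$-connected. This is exactly the hypothesis needed so that any equivariant cell of dimension $\le k$ sitting in a tail $X_i$ with $i\geq n(k)$ can be traded into $X_{i-1}$, i.e.\ pushed down the filtration, at the cost of creating a cell two dimensions higher.

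The main step I would carry out is the following. Because each $X_i$ is of finite type, it has only finitely many equivariant cells in each degree, but infinitely many in total; the point of trading is to absorb the ``excess'' low-dimensional cells. Concretely, to make the complex finite type one works degree by degree from the bottom. In degree $n$, the equivariant $n$-cells are distributed among the stages $X_i\setminus X_{i-1}$. Using $(\calF,n)$-connectivity of $(X_{i+1},X_i)$ for large $i$, an equivariant $n$-cell of $X$ whose attaching happens in a high stage can be traded (relative to the previous stage) for an equivariant $(n+2)$-cell, thereby removing it from degree $n$ while the compatibility of trading with the inclusion of $A$ keeps everything coherent. Iterating, one clears all but finitely many equivariant cells in each fixed degree. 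The delicate bookkeeping is that trading an $n$-cell produces an $(n+2)$-cell, so one must organize the induction so that the newly created higher cells are themselves eventually traded away or land in a range already under control; this is where the \emph{high} connectivity (connectivity tending to infinity, not merely bounded below) is essential, since it guarantees that for every degree the relevant connectivity is eventually available no matter how far up the filtration the created cells appear.

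I expect the main obstacle to be the convergence/stabilization issue: cell trading is an infinite process (there are infinitely many stages and each trade spawns a cell two dimensions up), so I must argue that the sequence of traded complexes stabilizes in each fixed dimension and that the resulting colimit is genuinely a finite type $G$-CW-complex $G$-homotopy equivalent to $X$, rather than merely an inverse limit of approximations. The clean way to handle this is to fix a degree $n$, choose the threshold $n(n+2)$ from the high-connectivity hypothesis, perform only finitely many trades to reduce the count of equivariant $n$-cells to the finitely many living below that threshold, and observe that once a degree is ``finalized'' no later trade (which only removes cells of its own degree and adds strictly higher ones) disturbs it. Running this for $n=0,1,2,\dots$ and taking the evident colimit over the finalized skeleta yields the desired complex. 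Throughout I would invoke the cell-trading construction of Subsection~\ref{sub:celltrading} as a black box, including its relative form recorded in the commutative triangle there, so that each individual trade preserves the $G$-homotopy type and keeps the isotropy groups inside $\calF$.
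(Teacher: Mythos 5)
Your outline follows the same strategy as the paper's proof: trade cells degree by degree from the bottom, relative to a fixed low stage of the filtration, using the relative cell-trading move of Subsection~\ref{sub:celltrading}; observe that a finalized degree is never disturbed by later trades (which only delete cells of their own degree and create cells two dimensions up); and pass to a colimit. However, there is a genuine gap exactly at the point you yourself flag as the main obstacle. Your resolution asserts that finalizing degree $n$ takes ``only finitely many trades.'' That is false: the equivariant $n$-cells lying above any chosen threshold are spread over \emph{infinitely} many stages $X_{i+1}\setminus X_i$ --- each stage contributes finitely many, but there are infinitely many stages, and moreover every trade of an $(n-2)$-cell in the previous round has created an additional $n$-cell. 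So finalizing a single degree already requires an infinite family of trades, and an infinite sequence of individual $G$-homotopy equivalences does not by itself converge to a $G$-homotopy equivalence onto anything; your ``evident colimit'' is precisely the step that still needs an argument.

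The missing device is the ladder organization used in the paper. One does not trade in $X$ all at once but separately in each complex $X_i$ of the filtration: since $X_i$ is of finite type, only finitely many trades are needed to clear its excess cells of the given degree, and these trades are compatible with the inclusions $X_i\hookrightarrow X_{i+1}$, so each round produces a new filtration (a commutative ladder of cellular inclusions) together with levelwise $G$-homotopy equivalences from the previous one. Iterating over degrees yields a staircase of such ladders in which all complexes of the $l$-th row share the same $(l-1)$-skeleton; the desired finite type complex $Y$ is the colimit of the diagonal, and each $X_i$ maps to the corresponding diagonal complex by a \emph{finite} composite of trades. The comparison $X\simeq Y$ then follows because a ladder of $G$-homotopy equivalences along $G$-cofibrations induces a $G$-homotopy equivalence on colimits, which is the cited result~\cite{waner}*{Theorem~1.2}. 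Without this bookkeeping --- trades performed per filtration stage, cofibrations recorded, and the colimit theorem invoked --- the passage from infinitely many individual trades to a single complex $G$-homotopy equivalent to $X$ is not justified, so you should restructure your induction along these lines rather than per degree on all of $X$ at once.
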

\begin{proof}
	First we find a sequence of numbers $n_0<n_1<n_2<\ldots$ 
	such that $(X_{i+1},X_i)$ is $(\calF,k)$-connected for each $i\ge n_k$.
	Upon passage to the sparser filtration $X_{n_1}\subset X_{n_2}\subset\ldots$ and renumbering we may assume that $n_k=k$. 
	In the following we denote the $i$-skeleton of a $G$-CW-complex $Y$ 
	by $\sk_i(Y)$.

	We inductively build the following commutative diagram of $G$-CW-complexes 
	of finite type and with isotropy in $\calF$: 
	\begin{equation}\label{eq: stair case}
	\begin{tikzcd}
		X_{0}\arrow{d}{=} \arrow[hook]{r} & X_{1}\arrow{d}{\simeq}\arrow[hook]{r} & X_{2}\arrow{d}{\simeq}\arrow[hook]{r} & X_{3}\arrow{d}{\simeq}\arrow[hook]{r}&\cdots\\
        X_{0}^{t1}\arrow[hook]{rd}\arrow[hook]{r} & X_{1}^{t1}\arrow{d}{=}\arrow[hook]{r} & X_{2}^{t1}\arrow{d}{\simeq}\arrow[hook]{r} & X_{3}^{t1}\arrow{d}{\simeq}\arrow[hook]{r}&\cdots\\
           & X_{1}^{t2}\arrow[hook]{rd} \arrow[hook]{r} &  X_{2}^{t2}\arrow{d}{=}\arrow[hook]{r} & X_{3}^{t2}\arrow{d}{\simeq}\arrow[hook]{r}&\cdots\\
           & & X_{2}^{t3}\arrow[hook]{rd}\arrow[hook]{r} &  X_{3}^{t3}\arrow{d}{=}\arrow[hook]{r} & \cdots\\
          & & & X_3^{t4}\arrow[hook]{r}\arrow[hook]{rd}&\cdots\\
		  & & & & \cdots
	\end{tikzcd}
	\end{equation}
	We enumerate the rows starting with~$0$. Further properties are: 
	\begin{enumerate}
	      \item Arrows labeled with $\simeq$ are $G$-homotopy equivalences and those with a hook indicate cellular inclusions.  
	      \item For $l\ge 1$ the $(l-1)$-skeleton of each space in the row $l$ is $\sk_{l-1}(X_{l-1}^{tl})$. 
	  	\end{enumerate}
	Next we explain how to construct the $l$-th row from the $(l-1)$-th row. For a 
	uniform treatment of the start case and the induction step, set 
	$X_i^{t0}:=X_i$, and let $l\ge 1$. 
	For $i\ge l-1$ the pair $(X_i^{t(l-1)}, X_{l-1}^{t(l-1)})$ is $(\calF, l-1)$-connected. 
	The boundary of an $(l-1)$-cell in $X_i^{t(l-1)}\bs X_{l-1}^{t(l-1)}$, $l\ge 2$, lies in 
	\[ \sk_{l-2}(X_i^{t(l-1)})=\sk_{l-2}(X_{l-2}^{t(l-1)})\subset X_{l-1}^{t(l-1)}.\]
	Hence we may and will trade every equivariant $(l-1)$-cell in $X_i^{t(l-1)}\bs X_{l-1}^{t(l-1)}$ with an equivariant $(l+1)$-cell resulting in a 
	$G$-CW-complex $X_i^{tl}$ of finite type and isotropy in $\calF$ 
	and a $G$-homotopy equivalence 
	$X_i^{t(l-1)}\to X_i^{tl}$. Doing so successively, i.e.~first trading all $(l-1)$-cells lying in $X_l^{t(l-1)}\bs X_{l-1}^{t(l-1)}$ in the  
	complexes $X_l^{t(l-1)}, X_{l+1}^{t(l-1)},\ldots$, then trading all $(l-1)$-cells lying in $X_{l+1}^{t(l-1)}\bs X_l^{t(l-1)}$ in the 
	complexes $X_{l+1}^{t(l-1)}, X_{l+2}^{t(l-1)},\ldots$ etc., results in a commutative ladder which gives the $l$-th row:
	\begin{equation*}
		\begin{tikzcd}
			X_{l-1}^{t(l-1)}\arrow[hook]{r}\arrow{d}{=} & X_{l}^{t(l-1)}\arrow{d}{\simeq}\arrow[hook]{r} & \ldots\\
			X_{l-1}^{tl}\arrow[hook]{r} & X_l^{tl}\arrow[hook]{r} & \ldots
		\end{tikzcd}
	\end{equation*}
	It is clear from the construction that the $(l-1)$-skeleton of $X_i^{tl}$ coincides with the one of $X_{l-1}^{tl}$. 
	The colimit $Y$ of the diagonal arrows in~\eqref{eq: stair case} is a $G$-CW complex of finite type whose isotropy groups are in~$\calF$. 
	Finally, we obtain a
	commutative diagram 
	\begin{equation*}\label{eq: final ladder}
	\begin{tikzcd}
		X_0\arrow[hook]{r}\arrow{d}{=} & X_1\arrow[hook]{r}\arrow{d}{\simeq}& X_2\arrow[hook]{r}\arrow{d}{\simeq}&\ldots\\
		X_0^{t1}\arrow[hook]{r}& X_1^{t2}\arrow[hook]{r} & X_2^{t3}\arrow[hook]{r}&\ldots
	\end{tikzcd}
	\end{equation*}
	where the vertical arrows are the compositions of the corresponding column arrows in~\eqref{eq: stair case}. 
	Since the horizontal arrows are $G$-cofibrations and each vertical arrow is a $G$-homotopy equivalence, 
	the induced map between the colimits of the rows, i.e.~between $X$ and $Y$, 
	is a $G$-homotopy equivalence~\cite{waner}*{Theorem~1.2}. 
\end{proof}

\section{Generalized posets and Morse theory}

In the following, we will always implicitly regard categories as topological objects via the nerve construction. More precisely,
the nerve of a category is a simplicial set and the geometric realization of this simplicial set is a CW-complex built up
from simplices glued along their faces (it is, however, not always a simplicial complex). The $0$-simplices of this complex 
are the objects of the category and the $n$-cells are in one to one correspondence with sequences of $n$ composable
non-identity arrows
\[A_0\xrightarrow{\alpha_1}A_1\xrightarrow{\alpha_2}\cdots\xrightarrow{\alpha_n}A_n\]
The top-dimensional faces of such a simplex are the simplices
\begin{gather*}
	A_1\xrightarrow{\alpha_2}A_2\xrightarrow{\alpha_3}\cdots\xrightarrow{\alpha_n}A_n\\
	A_0\xrightarrow{\alpha_1}\cdots A_{k-1}\xrightarrow{\alpha_{k+1}\circ\alpha_{k}}A_{k+1}\cdots\xrightarrow{\alpha_n} A_n\\
	A_0\xrightarrow{\alpha_1}\cdots\xrightarrow{\alpha_{n-2}}A_{n-2}\xrightarrow{\alpha_{n-1}}A_{n-1}
\end{gather*}
for $k=1,\ldots,n-1$. For the faces of the second type, it is possible that the composition $\alpha_{k+1}\circ\alpha_k$ is the identity.
In this case, the identity arrow is deleted in the sequence and we obtain a degeneracy in the gluing of the $n$-simplex.

For example, a partially ordered set (poset) can be regarded as a category $\calC$ with
\[\big|\Hom_\calC(A,B)\cup\Hom_\calC(B,A)\big|\leq 1\]
for every two objects $A,B$. In this case, the geometric realization of $\calC$ is indeed a simplicial complex.
However, we also want to work with complexes which arise from a slightly bigger class of categories:

\begin{defn}
	A \emph{generalized poset} is a small category such that $\alpha=\beta$ whenever $\alpha,\beta\colon A\rightarrow B$.
\end{defn}

Hence in a generalized poset, we allow objects to be uniquely isomorphic. Every subgroupoid of a generalized poset has trivial automorphism
groups and is therefore equivalent to a disjoint union of terminal categories. Thus, every connected component of such a subgroupoid is
contractible (recall that we implicitly regard categories as topological objects).
Since the inclusion of subcategories are inclusions of subcomplexes on the level of spaces and hence cofibrations, we can collapse connected subgroupoids in 
generalized posets and obtain homotopy equivalent generalized posets. If we collapse each connected component of the full subgroupoid
consisting of all the isomorphisms, we even get a homotopy equivalent (honest) poset which we call the \emph{underlying poset} of
the generalized poset.

More precisely, let $\calC$ be a generalized poset and $\calG$ a subgroupoid. We define the category $\calC/\calG$ as follows:
The objects of $\calC/\calG$ are equivalence classes of objects
of $\calC$ where we say that $X\sim Y$ are equivalent if there is an isomorphism $X\rightarrow Y$ in $\calG$ (which is unique).
We define
\[\Hom_{\calC/\calG}\big([X],[Y]\big):=\big\{A\rightarrow B\text{ in }\calC\ \big|\ A\in[X],~B\in[Y]\big\}\big/{\sim}\]
where elements $(A\rightarrow B)\sim(A'\rightarrow B')$ are defined to be equivalent if the diagram 
\begin{equation*}
	\begin{tikzcd}
		A\arrow{r}\arrow{d}[swap]{\calG\ni} & B\arrow{d}{\in\calG}\\
		A'\arrow{r} & B'
	\end{tikzcd}
\end{equation*}
commutes. Let $[\alpha\colon A\rightarrow B]$ and $[\beta\colon C\rightarrow D]$ be two composable arrows, i.e.~$[B]=[C]$, 
then there is a unique isomorphism $\gamma\colon B\rightarrow C$ in $\calG$ and one defines
\[[\beta\colon C\rightarrow D]\circ[\alpha\colon A\rightarrow B]:=[\beta\circ\gamma\circ\alpha\colon A\rightarrow D].\]
Set $\id_{[X]}=[\id_X]$. The next proposition is probably 
well-known and can be quickly deduced from Quillen's Theorem~A (see~\cite{op}*{Proposition 2.8} for a full proof). 

\begin{prop}\label{prop: underlying poset}
The projection $\calC\rightarrow\calC/\calG$ is a functor which induces a homotopy equivalence on the
level of spaces.
\end{prop}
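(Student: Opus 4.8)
The plan is to apply Quillen's Theorem~A to the projection functor $P\colon\calC\to\calC/\calG$. Writing $P\downarrow[X]$ for the left comma category over an object $[X]$ of $\calC/\calG$ (its objects are pairs $(A,u)$ with $A$ an object of $\calC$ and $u\colon[A]\to[X]$ a morphism in $\calC/\calG$, and a morphism $(A,u)\to(A',u')$ is an arrow $f\colon A\to A'$ of $\calC$ with $u'\circ[f]=u$), Quillen's Theorem~A reduces the claim to showing that the geometric realization of $P\downarrow[X]$ is contractible for every $[X]$.

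To prove this I would fix, once and for all, a representative $X_*\in[X]$ (possible since $[X]$ is the class of at least one object) and show that the identity functor of $P\downarrow[X]$ is naturally transformed into the constant functor at the object $(X_*,\id_{[X]})$. The key input is the generalized poset hypothesis: between any two objects of $\calC$ there is at most one arrow, and any two objects in the same $\calG$-class are uniquely isomorphic inside $\calG$. Using these, for each object $(A,u)$ I would produce a morphism $\eta_{(A,u)}\colon(A,u)\to(X_*,\id_{[X]})$ as follows. First represent $u$ by an honest arrow $\beta\colon B\to C$ of $\calC$ with $B\in[A]$ and $C\in[X]$; precomposing with the unique $\calG$-isomorphism $A\to B$ and postcomposing with the unique $\calG$-isomorphism $C\to X_*$ yields an arrow $g\colon A\to X_*$ of $\calC$ with $[g]=u$. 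Since $g$ is then the unique element of $\Hom_\calC(A,X_*)$, it defines a legitimate morphism $\eta_{(A,u)}$ in $P\downarrow[X]$.

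The naturality of $\eta$ is then automatic, and this is where the generalized poset property does the real work: for any morphism $f\colon(A,u)\to(A',u')$, both composites $\eta_{(A',u')}\circ f$ and $\eta_{(A,u)}$ are arrows $A\to X_*$ of $\calC$, hence equal because $\Hom_\calC(A,X_*)$ has at most one element. A natural transformation from the identity functor to a constant functor induces a homotopy from $\id$ to a constant map on realizations, so $|P\downarrow[X]|$ is contractible. As this holds for every $[X]$, Quillen's Theorem~A shows that $P$ induces a homotopy equivalence, and since $P$ is visibly a functor the proof is complete.

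The part requiring the most care is the bookkeeping in the comma category: checking that $u$ always has a representative which can be slid to source $A$ and target $X_*$ through the unique $\calG$-isomorphisms, and that doing so does not alter the class, so that $[g]=u$. Once this is in place, the uniqueness of arrows in a generalized poset trivializes both the well-definedness of $\eta_{(A,u)}$ and its naturality, so there is no genuine obstacle beyond this bookkeeping; morally it is the categorical incarnation of collapsing each contractible component of $\calG$ to a point.
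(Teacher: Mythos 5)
Your proof is correct and takes essentially the same approach as the paper, which deduces the proposition from Quillen's Theorem~A and refers to Proposition~2.8 of Thumann's operad-groups paper for the full details. Note that your natural transformation $\eta$ in fact exhibits $(X_*,\id_{[X]})$ as a terminal object of the comma category $P\downarrow[X]$ (the uniqueness of arrows in a generalized poset gives both existence and uniqueness of the morphism to it), which is the standard way this contractibility is verified.
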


We recall some more category-theoretic notions. 
The \emph{(biased) join} $\calC*\calD$ of two categories is the unique category containing $\calC$ and $\calD$ as subcategories and
exactly one extra arrow $C\rightarrow D$ for each two objects $C\in \calC$ and $D\in \calD$. Further, $\operatorname{Coone}(\calC*\calD)$ is the 
unique category containing $\calC*\calD$ as a subcategory, containing an extra object named $\mathtt{tip}$, an extra arrow $C\rightarrow\mathtt{tip}$
for each object $C\in\calC$ and an extra arrow $\mathtt{tip}\rightarrow D$ for each object $D\in\calD$. On the level of spaces, $\calC*\calD$ is homotopy equivalent to the join of (the nerves of) $\calC$ and $\calD$, and 
    $\operatorname{Coone}(\calC*\calD)$ is homotopy equivalent to the cone over 
    (the nerve of) $\calC*\calD$, thus contractible. 

If $\calC$ is a category and $X$ an object in $\calC$, then $X{\downarrow}\calC$ 
is the category whose objects are $\calC$-morphisms $X\to Y$ and whose morphisms 
are $\calC$-morphisms below $X$. Similary, one defines $\calC{\downarrow}X$ as 
the category whose objects are $\calC$-morphisms $Y\to X$ and whose morphisms are 
$\calC$-morphisms over $X$. 

\begin{remark}\label{rem:cataction}
	Let $G$ be a group acting on a small category $\calC$ by invertible functors. 
	An element $g\in G$ fixing a cell setwise already fixes its vertices
	and so fixes the cell pointwise. If $\calC$ is a generalized poset, a cell stabilizer is equal to the intersection 
	of the vertex stabilizers of that cell.
\end{remark}

\begin{remark}\label{rem:fixedpointset}
	Let $G$ act on a generalized poset $\calC$. Then denote by $\calC^G$ the full subcategory spanned by the objects fixed by $G$.
	The nerve of $\calC^G$ is the subspace of $G$-fixed points in the nerve of $\calC$. Hence the generalized poset $\calC^G$
	is the correct category representing the $G$-fixed point set in the nerve of $\calC$.
\end{remark}

Now we review the discrete Morse method for categories from \cite{op}*{Subsection 2.10} in the case of generalized posets.
Let $\calC$ be a generalized poset and $\calA\subset\calC$ a full subcategory. We are inductively adding objects to $\calA$ to build
up $\calC$. We start with $\calC_0:=\calA$. Assume we have already constructed $\calC_n$. Let $X$ be an object in 
$\calC\setminus\calC_n$ and observe the full subcategory $\calC_{n+1}$ spanned by $\calC_n$ and the object $X$.

{\it Case 1:} Assume that there is an isomorphism connecting $X$ with an element in $\calC_n$. Then the inclusion
$\calC_n\rightarrow\calC_{n+1}$ is a homotopy equivalence by \cite{op}*{Lemma 2.18}.

{\it Case 2:} Assume that there is no isomorphism connecting $X$ with an element in $\calC_n$. Then for every object $Y\in\calC_n$
there are either no arrows or only one arrow connecting $X$ with $Y$.
By \cite{op}*{Lemma 2.19} the diagram
\begin{equation}\label{eq: category pushout}
	\begin{tikzcd}
		lk_\downarrow(X)\arrow{d}\arrow{rr}&&
		\operatorname{Coone}\big(\overline{lk}_\downarrow(X)*\underline{lk}_\downarrow(X)\big)\arrow{d}\\
		\calC_n\arrow{rr}&&\calC_{n+1}
	\end{tikzcd}
\end{equation}
is a pushout on the level of spaces (in general \emph{not} a pushout in the category of small categories).
Here, $lk_\downarrow(X):=\overline{lk}_\downarrow(X)*\underline{lk}_\downarrow(X)$ is the
\emph{descending link} of $X$ with $\overline{lk}_\downarrow(X):=\calC_n{\downarrow}X$ and $\underline{lk}_\downarrow(X):=X{\downarrow}\calC_n$. 
The above pushout implies:

\begin{remark}
In~\eqref{eq: category pushout} the upper horizontal map is a cofibration since 
it is an inclusion of subcomplexes. Hence the 
Blakers-Massey homotopy excision theorem implies that 
the pair $(\calC_{n+1},\calC_n)$ is $(k+1)$-connected provided the descending link $lk_\downarrow(X)$ is $k$-connected. 
\end{remark}

If we have enough control over the connectivity of the descending links appearing in the process, we can ultimately deduce a lower connectivity
bound for the pair $(\calC,\calA)$. The obtained bound crucially depends on the order in which the objects are added. Such an order is often encoded in a
so-called Morse function: 

\begin{defn}
A \emph{generalized Morse function} on the pair $(\calC,\calA)$ is a function $f$ assigning a natural number to objects 
in $\calC\setminus\calA$ such that no two objects with the same $f$-value are connected by a non-invertible arrow. We call such $f$ \emph{well-behaved} 
if no two objects of different Morse height are joined by an isomorphism. 
\end{defn}

We then add objects by increasing
$f$-value. Note that two objects $X_1,X_2$ may have the same $f$-value. We claim that the descending links do not depend on the adding order of such objects.
This is easy to see if $X_1,X_2$ are not joined by any arrow. If there are (unique) isomorphisms between $X_1$ and $X_2$,
then the descending link $lk_\downarrow(X_1)$ (when adding $X_1$ first) is isomorphic to the descending link $lk_\downarrow(X_2)$ (when adding $X_2$
first) via the unique isomorphisms between $X_1$ and $X_2$.

So we see that when adding objects of a new Morse level, we have to check one descending link for each isomorphism component of the new
level. More precisely, we first add all the objects which are joined by isomorphisms to the lower level. This gives homotopy equivalences
by the first case above. Then we add an object which is not joined by an isomorphism to the lower level and have to check the corresponding 
descending link. We then add all the objects joined by an isomorphism to this object which again gives homotopy equivalences. We continue
with an object of another isomorphism component and repeat this process.

\begin{remark}\label{rem:desclinks}
	A well-behaved generalized Morse function $f$ on $\calC$ descends to a (honest) Morse function $f'$ (i.e.~no two objects of the same height are 
	joined by an arrow) on the underlying poset $\calC'$ of $\calC$. The underlying posets of the descending links with respect to $f$
	are the descending links with respect to $f'$ (more precisely, this is true for the descending link of each vertex of an isomorphism component 
	in $\calC$ which is added first). By Proposition~\ref{prop: underlying poset}, checking the connectivity of the descending links
	in a generalized poset is the same as checking the connectivity of the descending links in its underlying poset.
\end{remark}

\section{Tree almost automorphism groups}

\subsection{Review of tree almost automorphism groups}
For $q\geq 2$, we denote by $T_q$ the rooted $q$-regular tree, i.e.~the tree with root $o$ of degree $q$ and 
other vertices with degree $q+1$. 
We fix an embedding of $T_q$ in the plane. The embedding induces a canonical ordering, say from left to right, 
of the direct descendants of any vertex. The induced lexicographic order is a total order 
on the space of ends $B_q=\partial T_q$. We endow $B_q$ with the visual metric $d$ defined by
\[d(\xi, \eta)=\exp\big(-(\xi,\eta)_o\big)\]
where $(\xi,\eta)_o$ is the length of the common initial segment of the infinite paths 
that start at the root $o$ and represent $\xi$ and $\eta$. A ball in $B_q$ is represented by
a canonical subtree of $T_q$ containing some vertex and all of its descendants. Such a subtree is again a
rooted $q$-regular tree.

Let $\aut(T_q)$ be the group of isometries of $T_q$. We endow $\aut(T_q)$ with the compact open topology. A neighborhood basis of
$\id\in\aut(T_q)$ is given by the subsets
\[U_n=\big\{\gamma\in\aut(T_q)\ \big|\ \gamma|_{B(o,n)}=\id_{B(o,n)}\big\}\]
With this topology, $\aut(T_q)$ is a topological group which is totally disconnected and, by the Theorem of Arzel\`a-Ascoli, compact.
It is isomorphic, as a topological group, to the group of isometries of the space of ends $B_q$ endowed with the compact open topology,
i.e.~$\aut(T_q)=\isom(B_q)$ for short.

A \emph{similarity} $\phi\colon X\rightarrow Y$ of metric spaces is a homeomorphism $\phi$ such that there is a $\lambda>0$ with
$d(\phi(x_1),\phi(x_2))=\lambda d(x_1,x_2)$ for all $x_1,x_2\in X$. We call it a \emph{local similarity} if it is a homeomorphism and if for 
each $x\in X$ there is $r>0$ and $\lambda>0$ such that 
\[\phi|_{B(x,r)}\colon B(x,r)\rightarrow B(f(x),\lambda r)\]
is a $\lambda$-similarity. We call $\lambda$ the \emph{slope} of $\phi$ at $x$.
In the case of our rooted $q$-regular tree $T_q$ with space of ends $B_q$, we call a local similarity of
$B_q$ a \emph{spheromorphism}. If $\phi\colon B_q\rightarrow B_q$ is such a spheromorphism, we always find finite rooted $q$-regular
subtrees $F_1, F_2\subset T_q$ and an isometry of forests $f\colon T_q\bs F_1\rightarrow T_q\bs F_2$ which induces $\phi$. We say that $f$ 
\emph{represents} $\phi$. Consult \cite{hughes} for a detailed exposition of such correspondences. 

\begin{defn}
	We define $\calA_q$ to be the subgroup of $\homeo(B_q)$ consisting of all spheromorphisms. We endow it with the unique 
	group topology such that the inclusion $\isom(B_q)\hookrightarrow \calA_q$ is continuous and open.
\end{defn}

With this topology, $\calA_q$ is totally disconnected and locally compact. 
It is simply the topology where the sets $U_n\subset\isom(B_q)\subset\calA_q$ still form a neighborhood basis of $\id\in\calA_q$.
A neighborhood basis of $\gamma\in\calA_q$ is thus formed by the sets $\gamma U_n$. To see that multiplication is continuous, let 
$\gamma_1,\gamma_2\in\calA_q$ and $n\in\bbN$. We can
choose $n'\in\bbN$ such that $U_{n'}\gamma_2\subset \gamma_2 U_n$ (Remark \ref{rem:subnormal}). Then we have
\[\gamma_1 U_{n'}\gamma_2 U_n\subset\gamma_1\gamma_2 U_nU_n=\gamma_1\gamma_2 U_n\]
For the inverses, let $\gamma\in\calA_q$ and $n\in\bbN$. We can choose $n'\in\bbN$ such that $\gamma U_{n'}\subset U_n\gamma$
(Remark \ref{rem:subnormal}). Then we have
\[\big(\gamma U_{n'}\big)^{-1}\subset\big(U_n\gamma\big)^{-1}=\gamma^{-1}U_n^{-1}=\gamma^{-1} U_n\]

We now want to generalize these groups in two steps. For the first step, we need the following terminology:
For two metric spaces $X$ and $Y$, let $X\sqcup Y$
be the disjoint union of $X$ and $Y$ with the unique metric $d$ extending the metrics on $X$ and $Y$
and with $d(x,y)=\infty$ for $x\in X$ and $y\in Y$. By $nX$ we denote the
disjoint union $nX=X\sqcup\ldots\sqcup X$ of $n$ copies of $X$. 
Note that the summands in such a disjoint union are ordered. Thus, if in addition $X$ is ordered (for example if $X=B_q$ is
the space of ends of $T_q$ from above), we get an ordering on $nX$.

Let $r\geq 1$. Generalizing the notion of spheromorphisms, we call a local similarity $rB_q\rightarrow rB_q$ an 
\emph{$r$-spheromorphism}. Observe that the elements in $\isom(B_q)^r$ are $r$-spheromorphisms in a canonical way.
We endow $\isom(B_q)^r$ with the product topology. Then the sets $(U_n)^r\subset\isom(B_q)^r$ for varying $n$ form a 
neighborhood basis of $\id\in\isom(B_q)^r$.

\begin{defn}
	We define $\calA_{qr}$ to be the subgroup of $\homeo(rB_q)$ consisting of all $r$-spheromorphisms. We endow it with the
	unique group topology such that the inclusion $\isom(B_q)^r\hookrightarrow \calA_{qr}$ is continuous and open.
\end{defn}

With this topology, $\calA_{qr}$ is totally disconnected and locally compact. 
A neighborhood basis of $\gamma\in\calA_{qr}$ is formed by the sets $\gamma(U_n)^r$.
Obviously, we have $\calA_q=\calA_{q1}$.

For the second step, we look at certain closed subgroups of $\isom(B_q)$: Let $D\leq\operatorname{Sym}(q)$ be a subgroup of the group of permutations
on the set with $q$ elements. Observe that an element $\gamma\in\aut(T_q)$ induces
a permutation of the $q$ direct descendants of any
vertex $v\in T_q$ (after canonically identifying the full subtree below
the vertex $v$ with the full subtree below the vertex $\gamma(v)$ via the unique order-preserving tree automorphism mapping $v$ to $\gamma(v)$).
We say that $\gamma\in\aut(T_q)$ is \emph{$D$-admissible} if all these permutations are elements in $D$.
The set of all $D$-admissible elements in $\aut(T_q)=\isom(B_q)$ forms a closed subgroup of $\isom(B_q)$ which we denote
by $\isom_D(B_q)$ and which is denoted by $W(D)$ in \cite{caprace} and \cite{boudec}. Hence also $\isom_D(B_q)$ is totally disconnected and
compact. A neighborhood basis of the identity is formed by the sets $U_n^D:=\isom_D(B_q)\cap U_n$.

Let $U,V\subset B_q$ be balls and $\phi\colon U\rightarrow V$ a similarity. Since $U,V$ are represented by canonical subtrees
and $\phi$ by an isometry of these trees, it is clear what it means for $\phi$ to be \emph{$D$-admissible}. Consequently, we
can define an $r$-spheromorphism to be \emph{$D$-admissible} if it is locally determined by $D$-admissible similarities
$\phi\colon U\rightarrow V$ where $U,V$ are balls in $rB_q$.

\begin{defn}\label{defn: tree almost automorphism groups}
	We define $\calA_{qr}^D$ to be the subgroup of $\homeo(rB_q)$ consisting of all $r$-spheromorphisms which are $D$-admissible.
	We endow it with the unique group topology such that the inclusion $\isom_D(B_q)^r\hookrightarrow\calA^D_{qr}$ is continuous and open.
\end{defn}

With this topology, $\calA^D_{qr}$ is totally disconnected and locally compact. 
A neighborhood basis of $\gamma\in\calA^D_{qr}$ is formed by the sets $\gamma(U^D_n)^r$.
Obviously, we have $\calA_{qr}=\calA^{\operatorname{Sym}(q)}_{qr}$. Moreover, $\calA^{\{1\}}_{qr}$ is the
discrete Higman-Thompson group $V_{qr}$.

\begin{remark}\label{rem:discreteanddense}
	The Higman-Thompson groups $F_{qr}$ and $V_{qr}$ canonically embed into $\calA^D_{qr}$ for any $D\leq\operatorname{Sym}(q)$.
	In the first case the embedding is discrete and dense in the second. For $D=\{1\}$, this is trivial.
	For $D\neq\{1\}$, it suffices to check the following:
	\begin{itemize}
		\item $\isom_D(B_q)^r\cap F_{qr}=\{\id_{rB}\}$
		\item $\forall_{n\in\bbN}\colon U^D_n\cap V_{qr}\supsetneq\{\id_{rB}\}$
	\end{itemize}
\end{remark}

\begin{remark}\label{rem: increasing compact-open}
Let $O_n\subset \calA^D_{qr}$ be the subgroup of $D$-admissible $r$-spheromorphisms 
which are represented by an isometry of the forest $T_q\bs B(o,n) \sqcup \ldots \sqcup T_q\bs B(o, n)$. 
The subgroup $O_n$ is open and compact, and 
the increasing union $O=\bigcup O_n$ is non-compact. In particular, the 
Haar measures of the $O_n$ tend to~$\infty$. 
\end{remark}

In the following discussion we will fix $q\geq 2$, $r\geq 1$ and $D\leq\operatorname{Sym}(q)$. For better readability, we will
often drop these symbols from the notation. For example, we will write $\calA,T,B$ instead of $\calA_{qr}^D,T_q,B_q$.

\subsection{Generalized posets for tree almost automorphism groups}\label{sub:posetdefs}

We will now define a generalized poset $\calR$ for the group $\calA$ and consider two quotients $\calQ$ and $\calP$.

\begin{defn}\leavevmode
\begin{itemize}
	\item A \emph{sphero-vertex} is a local similarity $\phi\colon mB\rightarrow nB$ for $m,n\geq 1$ which is $D$-admissible,
		i.e.~which is locally determined by $D$-admissible similarities of subballs.
		We say that $m=\operatorname{lvl}(\phi)$ is the \emph{level} of $\phi$.
	\item For $m>n$, a \emph{merge map} $\mu\colon mB\rightarrow nB$ is a $D$-admissible local similarity
		such that for each summand
		$B$ in the domain $mB$ the restriction $\mu|_B\colon B\rightarrow C$ is a $D$-admissible similarity onto a ball 
		$C$ in the codomain $nB$.
	\item We say that such a merge map is \emph{very elementary} if for each summand 
		$B$ in the codomain $nB$, the preimage $\mu^{-1}(B)$ consists of at most $q$ summands in the domain $mB$
		(in other words, it is exactly one or exactly $q$ summands).
	\item A \emph{(very elementary) split map} is the inverse of a (very elementary) merge map.
	\item A \emph{transformation} is a $D$-admissible isometry $\nu\colon nB\rightarrow nB$ with $n\geq 1$. In other words,
		there is a permutation $\sigma\in\operatorname{Sym}(n)$ such that
		$\nu$ maps the $i$'th summand in the domain $nB$ onto the $\sigma(i)$'th summand in the
		codomain $nB$ via an element in $\isom_D(B)$.
	\item A \emph{strict transformation} is a $D$-admissible isometry $\nu\colon nB\rightarrow nB$ with $\sigma=\id$. In other
		words, a strict transformation is simply an element in $\isom_D(B)^n$.
\end{itemize}
\end{defn} 

\begin{remark}\label{rem:subnormal}
	An important feature of tree almost automorphism groups is a certain subnormality condition: Whenever $k\in\bbN$ and 
	$\phi\colon mB\rightarrow nB$ is a sphero-vertex, then we find $k'\in\bbN$ big enough so that $\phi\circ(U^D_{k'})^m\circ\phi^{-1}\subset (U^D_k)^n$.
	We have already used this to prove that multiplication and taking inverses in $\calA$ is continuous.
\end{remark}

The elements of the generalized poset $\calR$ are the sphero-vertices $nB\rightarrow rB$ for varying $n\geq 1$ (recall that
$r\geq 1$ is fixed).
An arrow from a sphero-vertex $\phi\colon nB\rightarrow rB$ to a sphero-vertex
$\psi\colon mB\rightarrow rB$ with $n\geq m$ is either a merge map or a transformation $\alpha$ with $\psi\circ\alpha=\phi$.
Composition is given by composing the merge maps resp.~transformations. Note that
the composition of a merge map with a transformation is again a merge map. The identities in $\calR$ are given by the identity
transformations.

Let $\calT$ be the subgroupoid of $\calR$ containing all the isomorphisms, i.e.~all the transformations. Then 
$\calP:=\calR/\calT$ is the underlying poset of $\calR$. Let $\calS$ be the subgroupoid of $\calR$ containing 
all the strict transformations. Then $\calQ:=\calR/\calS$ is a generalized poset.

The level function is a well-behaved generalized Morse function on $\calR$ and so descends to a 
generalized Morse function on $\calQ$ and to a Morse function on $\calP$. Let $\calR(n)$ resp.~$\calQ(n)$ 
resp.~$\calP(n)$ be the full subposets spanned by the objects of level at most $n$.

We define an action of $\calA$ on $\calR$ from the left by $\gamma\cdot\phi:=\gamma\circ\phi$ where $\gamma\in\calA$ and $\phi$ is
a sphero-vertex. Observe that this is a free action. It induces an action on $\calQ$ resp.~$\calP$ via $\gamma\cdot[\phi]:=[\gamma\circ\phi]$.
Since this action preserves the level of elements, the Morse filtrations $\calR(n)$, $\calQ(n)$ and $\calP(n)$ are $\calA$-invariant.

\begin{remark}\label{rem:contaction}
	Let $[\phi]$ be an object in $\calQ$ or $\calP$ with $\phi\colon nB\rightarrow rB$ a sphero-vertex. 
	By Remark \ref{rem:subnormal}, we can find $k\in\bbN$ big enough such that
	$(U^D_k)^r\circ\phi\subset\phi\circ\isom_D(B)^n$ and hence $\gamma\cdot[\phi]=[\phi]$ for each $\gamma\in (U^D_k)^r$. This shows that
	the action of $\calA$ on $\calQ$ and on $\calP$ is continuous (compare also with Remark \ref{rem:cataction}).
\end{remark}

\subsection{Properties of the generalized posets}\label{sub:posetprops}

\begin{prop}\label{prop:isotropy}
	The cell stabilizers of the action of $\calA$ on $\calQ$ are compact and open. 
\end{prop}
\begin{proof}
	Since the stabilizer of a cell is the intersection if its vertex stabilizers (Remark \ref{rem:cataction}), 
	it is sufficient to prove the statement
	for the vertex stabilizers. So let $[\phi]$ be an object in $\calQ$ and $\gamma\in\calA$ with $\gamma\cdot[\phi]=[\phi]$.
	We obtain a strict transformation $\nu\colon nB\rightarrow nB$ with $\gamma\circ\phi=\phi\circ\nu$, i.e.~$\gamma=\phi\circ\nu\circ\phi^{-1}$.
	Conversely, it is easy to see that an element of this form stabilizes $[\phi]$. Hence we have
	\[\operatorname{Stab}[\phi]=\{\phi\circ\nu\circ\phi^{-1}\ |\ \nu\textnormal{ strict transformation}\}\]
	The group of strict transformations of $nB$ is isomorphic to $\isom_D(B)^n$ which we endow with the product topology.
	The image of the injective group homomorphism
	\[\iota\colon \isom_D(B)^n\rightarrow\calA\hspace{8mm}\nu\mapsto\phi\circ\nu\circ\phi^{-1}\]
	is equal to $\operatorname{Stab}[\phi]$ and it suffices to show that $\iota$ is continuous and open.
	
	Concerning continuity, let $k\in\bbN$. By Remark \ref{rem:subnormal}, we find $k'\in\bbN$ big enough so that 
	\[(U^D_{k'})^n\circ\phi^{-1}\subset \phi^{-1}\circ(U^D_k)^r\]
	For $\gamma_1,\ldots,\gamma_n\in \isom_D(B)$ and $\alpha_1,\ldots,\alpha_n\in U^D_{k'}$ we then have
	\begin{align*}
		\iota\big(\gamma_1\alpha_1,\ldots,\gamma_n\alpha_n\big) &=\phi\big(\gamma_1\alpha_1,\ldots,\gamma_n\alpha_n\big)\phi^{-1}\\
		&= \phi\big(\gamma_1,\ldots,\gamma_n\big)\big(\alpha_1,\ldots,\alpha_n\big)\phi^{-1}\\
		&= \phi\big(\gamma_1,\ldots,\gamma_n\big)\phi^{-1}\big(\beta_1,\ldots,\beta_r\big)\\
		&= \iota\big(\gamma_1,\ldots,\gamma_n\big)\big(\beta_1,\ldots,\beta_r\big)
	\end{align*}
	for suitable $\beta_1,\ldots,\beta_r\in U^D_k$. This proves continuity.
	
	Concerning openness, let $k\in\bbN$. By Remark \ref{rem:subnormal}, we find $k'\in\bbN$ big enough so that
	\[\phi^{-1}\circ(U^D_{k'})^r\subset(U^D_k)^n\circ\phi^{-1}\]
	Similarly as above, for every $\gamma_1,\ldots,\gamma_n\in \isom_D(B)$ and $\beta_1,\ldots,\beta_r\in U^D_{k'}$, we
	find $\alpha_1,\ldots,\alpha_n\in U^D_k$ such that
	\[\iota\big(\gamma_1,\ldots,\gamma_n\big)\big(\beta_1,\ldots,\beta_r\big)=\iota\big(\gamma_1\alpha_1,\ldots,\gamma_n\alpha_n\big)\]
	This shows that $\iota$ is open.
\end{proof}

\begin{cor}\label{cor:contr}
	$\calQ$ is a proper smooth $\calA$-CW-complexes.
\end{cor}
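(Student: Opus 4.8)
The plan is to verify that $\calQ$ satisfies the four criteria of Remark~\ref{rem:gcwandcw}, which then immediately upgrade the bare cell structure to that of a proper smooth $\calA$-CW-complex. Recall that, via the nerve construction used throughout this section, the generalized poset $\calQ$ is realized as a CW-complex whose cells correspond to chains of composable non-identity arrows. So the only work is to check the four bullet points of Remark~\ref{rem:gcwandcw} for the $\calA$-action $\gamma\cdot[\phi]=[\gamma\circ\phi]$.

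First I would record that this action is by invertible functors, since precomposition with $\gamma^{-1}$ inverts $\gamma$. Hence Remark~\ref{rem:cataction} applies verbatim: the action is by cell permuting homeomorphisms, and any element fixing a cell setwise already fixes its vertices and therefore the whole cell pointwise. The same remark identifies each cell stabilizer with the intersection of the stabilizers of its vertices, because $\calQ$ is a generalized poset.

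Next, continuity of the action on $\calQ$ is exactly the content of Remark~\ref{rem:contaction}. Finally, for the stabilizers: by the previous paragraph it suffices to control vertex stabilizers, and Proposition~\ref{prop:isotropy} shows that each $\operatorname{Stab}[\phi]$ is open and compact in $\calA$. Compactness comes from its being the continuous image $\iota\big(\isom_D(B)^n\big)$ of the compact group $\isom_D(B)^n$, and openness from $\iota$ being an open map onto $\calA$. An arbitrary cell stabilizer, being a finite intersection of open compact subgroups, is then again open and compact.

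With all four hypotheses in hand, Remark~\ref{rem:gcwandcw} yields that $\calQ$ is a smooth $\calA$-CW-complex whose isotropy groups are the cell stabilizers, and compactness of those stabilizers makes it proper. Since every ingredient is already established in the preceding results, there is no genuine obstacle here: the corollary is essentially bookkeeping, with the substantive input being the openness-and-compactness computation carried out in Proposition~\ref{prop:isotropy}.
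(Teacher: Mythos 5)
Your proof is correct and follows exactly the same route as the paper: the paper's proof of this corollary is precisely the observation that Remark~\ref{rem:gcwandcw} applies once one invokes Remark~\ref{rem:contaction} (continuity), Remark~\ref{rem:cataction} (cell permuting action, setwise implies pointwise fixing, cell stabilizers as intersections of vertex stabilizers), and Proposition~\ref{prop:isotropy} (stabilizers open and compact). Your additional unpacking of why the stabilizers are open and compact merely restates the content of Proposition~\ref{prop:isotropy}, so nothing differs in substance.
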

\begin{proof}
	This follows from Remark \ref{rem:gcwandcw} together with Remark \ref{rem:contaction}, Remark \ref{rem:cataction} 
	and Proposition \ref{prop:isotropy}.
\end{proof}

\begin{prop}\label{prop:contr}
	$\calQ$ is contractible.
\end{prop}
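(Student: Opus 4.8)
The plan is to prove contractibility by showing that the generalized poset $\calR$ is \emph{directed}: any two of its objects admit a common refinement that maps to both. Since $\calS$ is a subgroupoid of $\calR$, Proposition~\ref{prop: underlying poset} provides a homotopy equivalence $\calR\to\calR/\calS=\calQ$, so it suffices to prove that $\calR$ is contractible. For this I would invoke the standard fact that a nonempty small category in which every pair of objects receives morphisms from a common object, and in which parallel arrows agree, is cofiltered and therefore has contractible nerve. The second condition is automatic here because $\calR$ is a generalized poset, so everything reduces to producing common refinements.

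The heart of the argument is thus the following refinement statement. Given two objects $\phi_1\colon n_1B\to rB$ and $\phi_2\colon n_2B\to rB$, I would form $g:=\phi_2^{-1}\circ\phi_1\colon n_1B\to n_2B$, which is again a $D$-admissible local similarity: the inverse of a $D$-admissible local similarity is $D$-admissible because $D$ is a group, and compositions of $D$-admissible local similarities are $D$-admissible. Since $g$ is a local similarity and $n_1B$ is compact, $n_1B$ is covered by finitely many balls on each of which $g$ restricts to a $D$-admissible similarity onto a ball of $n_2B$; refining, I would choose a partition of $n_1B$ into finitely many canonical balls $C_1,\dots,C_m$ with this property. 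Identifying each $C_j$ with $B$ through the canonical order-preserving similarity yields a merge $\mu_1\colon mB\to n_1B$, and then $\mu_2:=g\circ\mu_1\colon mB\to n_2B$ is also a merge, because on each summand it is the composite of two $D$-admissible similarities onto balls. Setting $\chi:=\phi_1\circ\mu_1=\phi_2\circ\mu_2$, the maps $\mu_1$ and $\mu_2$ are arrows $\chi\to\phi_1$ and $\chi\to\phi_2$ in $\calR$ (merges, or transformations in the degenerate case $m=n_i$), so $\chi$ is the desired common refinement.

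Granting this, $\calR$ is nonempty since it contains $\id_{rB}$, any two objects map down to a common object, and parallel arrows coincide because $\calR$ is a generalized poset; hence $\calR$ is cofiltered, its nerve is contractible, and $\calQ$ is contractible as well. I expect the main obstacle to be the refinement lemma itself, and specifically the point that a \emph{single} subdivision of $n_1B$ can be taken fine enough that $\mu_1$ \emph{and} $\mu_2=g\circ\mu_1$ simultaneously satisfy the summand-onto-ball condition of a merge map, together with the bookkeeping ensuring that all identifications stay $D$-admissible. Both facts follow from the defining local-similarity property of $g$, the compactness of $n_1B$ (so that finitely many balls suffice), and the observation that order-preserving identifications of canonical subtrees are $D$-admissible for any $D\leq\operatorname{Sym}(q)$; this is the same mechanism underlying the subnormality phenomenon recorded in Remark~\ref{rem:subnormal}.
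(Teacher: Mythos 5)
Your proposal is correct and follows essentially the same route as the paper: both reduce to showing that $\calR$ is cofiltered (hence contractible) and descend to $\calQ=\calR/\calS$ via Proposition~\ref{prop: underlying poset}, with the parallel-arrows condition being automatic for a generalized poset. Your common-refinement step --- partitioning $n_1B$ finely enough for the transition map $g=\phi_2^{-1}\circ\phi_1$ and setting $\chi=\phi_1\circ\mu_1$ --- is the paper's construction (a partition of $rB$ refining the two partitions induced by $\phi_1$ and $\phi_2$, with the order-preserving merge onto it) transported by $\phi_1$, so the two arguments agree up to bookkeeping.
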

\begin{proof}
	It suffices to prove that $\calR$ 
	is contractible (Proposition~\ref{prop: underlying poset}). It is 
	well known that a cofiltered category is contractible. Recall that a category $\calC$ is 
	\emph{cofiltered} if for every two objects $X,Y$ there is another object $Z$ and 
	morphisms from $Z$ to $X$ and $Y$ and if for every pair of morphisms 
	$f_1, f_2\colon X\to Y$ there is a morphism $g\colon Z\to X$ with 
	$f_1\circ g=f_2\circ g$. 	
	
	We now show that $\calR$ is 
	cofiltered. The second property is automatically satisfied 
	since $\calR$ is a generalized poset. 
	Let $\phi_1,\phi_2$ be two objects in $\calR$. Since $\phi_i\colon n_iB\rightarrow rB$ is a $D$-admissible local similarity, we find a partition
	$\calD_i$ of the codomain $rB$ into disjoint balls which are $D$-similar to balls in the domain $n_iB$ via $\phi_i$.
	We then find a partition $\calD$ of $rB$ into disjoint balls which both refines $\calD_1$ and $\calD_2$. Set $n:=|\calD|$, i.e.~the number
	of balls in the partition $\calD$. Let $\phi\colon nB\rightarrow rB$ be the unique order-preserving merge map mapping summands in $nB$ to
	balls in $\calD$. Then $\alpha_i:=\phi_i^{-1}\circ \phi$ are merge maps or transformations and consequently 
	yield arrows $\phi_1\leftarrow \phi\rightarrow\phi_2$.
\end{proof}

\begin{prop}\label{prop:finitetype}
	Each $\calQ(k)$ is an $\calA$-CW-complex of finite type, i.e.~there are only finitely many equivariant cells in each 
	dimension.
\end{prop}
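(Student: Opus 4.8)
The plan is to prove the stronger statement that $\calQ(k)$ has only finitely many $\calA$-orbits of cells in total (hence certainly in each dimension), by combining transitivity of the $\calA$-action on the objects of a fixed level with the finiteness of the part of $\calQ$ lying below a single object.

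First I would record that $\calA$ acts transitively on the objects of each level. Given two sphero-vertices $\phi,\phi'\colon nB\to rB$ of the same level $n$, the composite $\gamma\defq\phi'\circ\phi^{-1}\colon rB\to rB$ is again a $D$-admissible local similarity, hence an element of $\calA$, and $\gamma\cdot\phi=\phi'$. This descends to transitivity on the level-$n$ objects of $\calQ=\calR/\calS$. Consequently I fix, for each level $m_0\le k$ that actually occurs, one representative object $[\phi_0^*]$; there are only finitely many such representatives.

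Next I would use that every cell of $\calQ$ is a chain $[\phi_0]\to\cdots\to[\phi_n]$ of non-identity composable arrows. Since merge maps strictly lower the level while non-strict transformations preserve it, the level is non-increasing along any such chain, so the source $[\phi_0]$ carries the maximal level $m_0\le k$ among the vertices. Applying the transitivity above, I translate any cell by an element of $\calA$ so that its source becomes the fixed representative $[\phi_0^*]$ of its level. Thus every cell of $\calQ(k)$ lies in the $\calA$-orbit of a cell whose source is one of the finitely many $[\phi_0^*]$, and it only remains to bound the number of chains emanating from a single $[\phi_0^*]$.

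The heart of the argument is that the full subcategory of $\calQ$ spanned by the objects reachable from $[\phi_0^*]$ along directed arrows is finite. The sphero-vertex $\phi_0^*\colon m_0B\to rB$ determines an ordered partition of $rB$ into $m_0$ balls $C_1,\ldots,C_{m_0}$. Any reachable object $[\psi]$ satisfies $\phi_0^*=\psi\circ\beta$ for a composite $\beta$ of merge maps and transformations, so $\psi$ sends its summands onto the balls of a \emph{coarsening} of $\{C_1,\ldots,C_{m_0}\}$ obtained by merging sibling balls in the tree; there are only finitely many such coarsenings, bounded by the finite subtree spanned by the $C_i$ and the roots. Crucially, passing to $\calQ=\calR/\calS$ absorbs the continuous data: the $D$-admissible isometries of the individual summands form exactly the strict transformations, so modulo $\calS$ an object is determined by its ordered ball-partition up to the finitely many relabelings coming from non-strict transformations (indexed by $\operatorname{Sym}$ of the relevant summand count). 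The same bookkeeping bounds each hom-set in this subcategory. Hence the reachable subcategory is finite, so there are finitely many chains with source $[\phi_0^*]$, and the proposition follows. I expect the main obstacle to be precisely this last step: showing rigorously that quotienting by the strict transformations collapses the infinite, compact family $\isom_D(B)$ of summand-wise isometries to finite combinatorial data, so that both the objects and the morphisms below $[\phi_0^*]$ become finite in number; the transitivity step and the monotonicity of the level along chains are comparatively routine.
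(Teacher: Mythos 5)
Your overall architecture --- transitivity of $\calA$ on the objects of a fixed level, then finiteness of the part of $\calQ$ below one representative per level --- is close in spirit to the paper's proof (which proves the same transitivity and then local finiteness of $\calQ(k)$, concluding that the quotient category $\calA\backslash\calQ(k)$ is finite). But there are two genuine problems. First, your headline ``stronger statement'' --- finitely many $\calA$-orbits of cells \emph{in total} --- is false. $\calQ$ is only a \emph{generalized} poset: non-strict transformations survive the quotient by $\calS$ as non-identity isomorphisms between \emph{distinct} objects (e.g.\ $[\phi]$ and $[\phi\circ\tau]$ for $\tau$ a summand swap), and a pair of uniquely isomorphic objects $A\cong B$ yields chains $A\to B\to A\to\cdots$ of composable non-identity arrows of arbitrary length, hence cells --- and orbits of cells --- in every dimension of $\calQ(k)$ once $k\geq 2$. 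So ``finitely many chains with source $[\phi_0^*]$'' cannot hold in total; what your finite reachable subcategory does give is at most $(\#\mathrm{morphisms})^n$ chains of each fixed length $n$, which is exactly what finite type requires. The conclusion survives, but only after you drop the overclaim and count per dimension.

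Second, and more seriously, your combinatorial bookkeeping is set up on the wrong side. You let $\phi_0^*\colon m_0B\to rB$ ``determine an ordered partition of $rB$ into $m_0$ balls $C_1,\ldots,C_{m_0}$'', but a sphero-vertex is an arbitrary $D$-admissible \emph{local} similarity, not a merge map: the image of a summand need not be a ball of $rB$ at all, so the reachable objects are not governed by coarsenings of any ball partition of $rB$. The finite data must instead be extracted from the connecting arrows: an object below $[\phi_0^*]$ corresponds to a merge map or transformation $\alpha\colon m_0B\to mB$ modulo strict transformations, and one must show this set is finite --- precisely the step you yourself flag as the ``main obstacle'' and leave unproven. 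The paper closes it by a normalization argument: writing $\alpha|_{B_i}=c_i\circ g_i$ with $c_i$ the unique order-preserving similarity onto the image ball, the correction $g_i$ has the same local permutations as $\alpha|_{B_i}$ and hence lies in $\isom_D(B)$, so composing with a suitable strict transformation makes $\alpha$ order-preserving on every summand; such normalized maps are determined by finite combinatorial data (a partition of the codomain into balls, given by finite subtrees, together with an assignment of domain summands), of which there are finitely many. Without this normalization --- or some substitute --- your proof does not go through; with it, your route essentially reproduces the paper's argument.
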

\begin{proof}
	Let $n\in\bbN$ and $\phi_1,\phi_2$ be two sphero-vertices of level $n$. Then $\gamma:=\phi_2\circ\phi_1^{-1}$ is an element
	in $\calA$. Hence in $\calQ$ we have $\gamma\cdot[\phi_1]=[\phi_2]$. This proves that any two objects in $\calQ$ of the 
	same level are $\calA$-equivalent. So there are only finitely many $\calA$-classes of objects in $\calQ(k)$.
	In other words, the category $\calA\backslash\calQ(k)$ has only finitely many objects.

	Now let $[\phi_1],[\phi_2]$ be two objects in $\calQ$ represented by sphero-vertices $\phi_i\colon n_iB\rightarrow rB$. 
	Consider an arrow $[\phi_1]\rightarrow[\phi_2]$
	represented by a merge map or a transformation $\alpha$ with $\phi_2\circ\alpha=\phi_1$. After changing either 
	the representative $\phi_1$ or the representative $\phi_2$, 
	we may assume that $\alpha$ restricted to any summand $B$ in the domain $n_1B$ is order-preserving. 
	Observe that there are only finitely many such merge maps or transformations $n_1B\rightarrow n_2B$ with this property. This
	observation implies that $\calQ(k)$ is locally finite in the sense that any given object is the domain or codomain of only
	finitely many arrows. Hence also the category $\calA\backslash\calQ(k)$ is locally finite and thus finite because
	it has only finitely many objects. So its nerve is of finite type.
\end{proof}

\subsection{Connnectivity of the descending links}\label{sub:conn_desc_link}

In this subsection, we want to prove that the descending links in $\calP$ with respect to the Morse function
given by the level of sphero-vertices are highly connected. More precisely: For each $k\in\bbN$ there is
$n\in\bbN$ such that for all objects $X\in\calP$ of Morse height at least $n$ the descending link $lk_\downarrow(X)$
is $k$-connected. By Remark \ref{rem:desclinks}, we get the same result for the descending links of $\calR$ and $\calQ$.
We will be mainly interested in the statement for the generalized poset $\calQ$.

Fix $X=[\phi]$ an object in $\calP$ of level $n$. The objects of $lk_\downarrow(X)$ are in one to one correspondence with merge maps
$\mu\colon nB\rightarrow kB$ with $k<n$ modulo transformations on the codomain, i.e.~$\mu$ and $\mu'$ are equivalent if and only
if there is a transformation $\sigma$ with $\sigma\circ\mu=\mu'$. Equivalently, the objects are in one to one correspondence
with split maps $\nu\colon kB\rightarrow nB$ with $k<n$ modulo transformations on the domain. Denote by $[\![\nu]\!]$ the equivalence
class of a split map $\nu$ under this equivalence relation. We have $[\![\nu_1]\!]\rightarrow[\![\nu_2]\!]$ in $lk_\downarrow(X)$
if and only if the two
objects are equal or if there is a merge map $\mu$ with $\nu_2\circ\mu=\nu_1$. 
If such a merge map exists, it exists for any representatives $\nu_1,\nu_2$ and is uniquely determined by them.
Let $lk^*_\downarrow(X)$ be the subposet spanned by the elements $[\![\nu]\!]$ with $\nu$ a \emph{very elementary} split map.

\begin{prop}
	The inclusion $lk^*_\downarrow(X)\rightarrow lk_\downarrow(X)$ is a homotopy equivalence.
\end{prop}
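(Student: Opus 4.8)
The plan is to prove this via Quillen's Theorem~A applied to the inclusion functor $i\colon lk^*_\downarrow(X)\to lk_\downarrow(X)$. Since $X=[\phi]$ is fixed as an object of the honest poset $\calP$, both sides are genuine posets and $i$ is order preserving, so for each object $Z\in lk_\downarrow(X)$ the comma category $i{\downarrow}Z$ is simply the subposet of those very elementary elements admitting a morphism to $Z$. I would show that each such fiber has a terminal object, hence is contractible, and conclude by Quillen's Theorem~A that $i$ is a homotopy equivalence; by Remark~\ref{rem:desclinks} the analogous statement for $\calR$ and $\calQ$ then follows.

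First I would fix the combinatorial dictionary. An object $Z=[\![\nu]\!]$ with $\nu\colon kB\to nB$ and $k<n$ records a subdivision of the $k$ domain summands into the $n$ summands of $nB$, i.e.\ a forest of $k$ finite $D$-admissible $q$-ary trees with $n$ ordered leaves. The very elementary split maps are exactly those forests each of whose trees has height at most one, so that a summand is either left alone or split into its $q$ children. Call a sub-ball $C$ occurring in the subdivision $\nu$ \emph{reducible} if its $q$ immediate children are all summands of $nB$; equivalently, $C$ is an internal node of the forest all of whose children are leaves. Because $k<n$ the subdivision is nontrivial, and a caret of maximal depth has only leaves as children, so the set $\calC(\nu)$ of reducible balls is nonempty.

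Next I would identify the fiber. For a nonempty set $S\subseteq\calC(\nu)$ let $\nu_S$ be the height-one split map that reduces exactly the balls in $S$, grouping for each $C\in S$ its $q$ leaf-children and leaving every other leaf a singleton. One checks that $\nu_S=\nu\circ\mu_S$ for a suitable merge map $\mu_S$, so that $[\![\nu_S]\!]\to Z$ in $lk_\downarrow(X)$, and that conversely any very elementary $[\![\nu^*]\!]\to Z$ is of the form $[\![\nu_{S}]\!]$ with $S=S(\nu^*)\subseteq\calC(\nu)$ the set of carets of $\nu^*$, since each such caret persists as a reducible ball of $\nu$. Moreover $[\![\nu_{S_1}]\!]\to[\![\nu_{S_2}]\!]$ holds precisely when $S_1\subseteq S_2$. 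Thus $i{\downarrow}Z$ is isomorphic to the poset of nonempty subsets of $\calC(\nu)$ ordered by inclusion, which has the maximum $\calC(\nu)$ — the very elementary split $\nu_{\calC(\nu)}$ reducing all reducible balls at once. A poset with a maximum is contractible, so every fiber is contractible and Quillen's Theorem~A gives the claim.

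The main obstacle will be making this dictionary rigorous while keeping $D$-admissibility and the quotient by transformations under control. Concretely, I must verify that $\calC(\nu)$ is intrinsic to the class $[\![\nu]\!]$ and independent of the chosen representative; that the assignment $S\mapsto[\![\nu_S]\!]$ lands among very elementary classes, is injective, and is order preserving in the direction above; and, most importantly, that \emph{every} very elementary class mapping to $Z$ arises this way, i.e.\ that the caret-groups of such a $\nu^*$ necessarily coincide with children of reducible balls of $\nu$ (here the cancellation occurring in the composite $\nu\circ\mu$ must be analyzed, not the naive superposition of carets). Working throughout in the underlying poset $\calP$ via Proposition~\ref{prop: underlying poset} should neutralize the transformation bookkeeping and reduce everything to the elementary combinatorics of $D$-admissible $q$-ary forests on $n$ leaves.
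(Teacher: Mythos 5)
Your proposal is correct, but it takes a genuinely different route from the paper's proof. The paper does not invoke Quillen's Theorem~A here; instead it introduces a second Morse function on $lk_\downarrow(X)\setminus lk^*_\downarrow(X)$, namely $n-k$ for a class $[\![\nu]\!]$ with $\nu\colon kB\rightarrow nB$, builds $lk_\downarrow(X)$ up from $lk^*_\downarrow(X)$ by attaching the non-very-elementary classes in order of increasing height, and shows that the descending link of each such class is contractible. That descending link is the poset of \emph{all} proper nonempty sub-objects of $[\![\nu]\!]$ (described in the paper as nontrivial coarsening partitions of $kB$, in your language all proper nonempty sub-forests), which in general has neither a maximum nor a minimum; the paper contracts it by a zigzag of natural transformations $\id\Rightarrow F\Leftarrow \const_{P_\nu}$, where $F$ truncates a partition to depth one and $P_\nu$ is the depth-one truncation of $\nu$ itself. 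Your fibers are smaller and simpler: $i{\downarrow}Z$ consists only of the very elementary sub-objects of $Z=[\![\nu]\!]$, and the downward-closure property of sub-forests (exactly the ``cancellation in $\nu\circ\mu$'' point you flag as the main obstacle) forces every such sub-object to be given by a nonempty set of bottom carets, so the fiber is a poset with maximum $\calC(\nu)$ and its contractibility is immediate; Theorem~A then finishes the proof --- note that you correctly use the comma categories $i{\downarrow}Z$ rather than $Z{\downarrow}i$, which would typically be empty. Both arguments rest on the same combinatorial dictionary identifying arrows into $[\![\nu]\!]$ with compatible coarsenings/sub-forests, so that verification is not avoided either way; also observe that your cone point $\calC(\nu)$ (the bottom layer of carets) differs from the paper's apex $P_\nu$ (the complement of the top layer). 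What your route buys is a trivial contractibility check and no auxiliary Morse function; what the paper's route buys is that it stays uniformly inside the discrete Morse framework used throughout the paper (for instance for the connectivity of the complexes $\calC_n$), at the cost of a cleverer contraction.
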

\begin{proof}
	We want to build up $lk_\downarrow(X)$ from $lk^*_\downarrow(X)$ using a Morse function. If $\nu\colon kB\rightarrow nB$
	is a split map which is not very elementary, then we define the Morse height of 
	$[\![\nu]\!]\in lk_\downarrow(X)\setminus lk^*_\downarrow(X)$ to be the number $n-k$. It suffices to show that the descending
	link of each such vertex is contractible (and non-empty in particular).
	
	The objects of the descending link $lk_\downarrow[\![\nu]\!]$ are in one to one correspondence with merge maps $\mu$ such
	that $\nu\circ\mu$ is a split map again, modulo transformations on the domain. Denote the corresponding classes by $\langle\mu\rangle$.
	We have $\langle\mu_1\rangle\rightarrow\langle\mu_2\rangle$ in $lk_\downarrow[\![\nu]\!]$ if and only if the two objects are equal or 
	if there is a merge map 
	$\rho$ with $\mu_2\circ\rho=\mu_1$ (which exists independently of the choices of $\mu_1,\mu_2$ and is uniquely determined by them).
	
	It is now easy to see that the poset $lk_\downarrow[\![\nu]\!]$ is isomorphic to the following poset: As elements we have
	non-trivial partitions $P$ of $kB$ into disjoint subballs such that the split map $\nu$ either maps balls in $P$ homeomorphically 
	onto summands in $nB$ or splits them even further and such that at least one ball in $P$ is split non-trivially
	(i.e.~$\nu$ maps such a ball homeomorphically onto more than one summand in $nB$). We have $P_1\geq P_2$ if
	and only if $P_1$ is a refinement of $P_2$, i.e.~if and only if each ball of $P_1$ is contained in some ball of $P_2$.
	
	Now let $P_\nu$ be the partition of $kB$ which divides a summand $B$ into its $q$ maximal proper subballs if and only
	if $\nu$ splits it non-trivially. Since $\nu$ is assumed to be \emph{not} very elementary, we have $P_\nu\in lk_\downarrow[\![\nu]\!]$
	and hence $lk_\downarrow[\![\nu]\!]$ is non-empty.
	Let $P$ be an arbitrary partition in $lk_\downarrow[\![\nu]\!]$. Define $F(P)$ to be the 
	partition which divides a summand $B$ in $kB$ into its $q$ maximal proper subballs if and only if $P$ divides it into
	subballs. Then we have $P\geq F(P)$. Moreover, $F$ defines an order-preserving map (a functor)
	$F\colon lk_\downarrow[\![\nu]\!]\rightarrow lk_\downarrow[\![\nu]\!]$.  Last but not least, we have $P_\nu\geq F(P)$ for each $P$.
	
	The existence of such an $F$ implies the contractibility of the poset $lk_\downarrow[\![\nu]\!]$: The arrows 
	$P\geq F(P)$ form a natural transformation from the identity functor of $lk_\downarrow[\![\nu]\!]$ to the functor $F$.
	Furthermore, the arrows $P_\nu\geq F(P)$ form a natural transformation from the constant functor with value $P_\nu$ to $F$. 
	Natural transformations yield homotopies on the level of spaces. Hence we get a homotopy from the identity to a constant map.
	Geometrically, $F$ is a deformation retraction into the base of a cone with tip $P_\nu$.
\end{proof}

It remains to show that the connectivity of the posets $lk^*_\downarrow(X)$ tends to infinity as $\operatorname{lvl}(X)$ tends to
infinity. We first give a simpler description of this poset, starting with the case $D=\operatorname{Sym}(q)$: Recall that an element 
in this poset is a class $[\![\nu]\!]$ of very elementary split maps $\nu\colon kB\rightarrow nB$ modulo transformations on the domain. 
In such a very elementary split map, either a summand $B$ of $kB$ is not split 
at all, or else it is split into its $q$ maximal proper subballs. Thus, such a class $[\![\nu]\!]$ is already uniquely determined by the 
information which summands in $nB$ arise from a splitting of a single summand in $kB$. So the elements in the poset $lk^*_\downarrow(v)$ can
be regarded as partitions $P$ of the set ${\bf n}=\{1,\ldots,n\}$ into subsets of cardinality either $1$ or $q$ (the elements in
${\bf n}$ represent the summands in $nB$) and there has to be at least one subset of cardinality $q$. We have $P_1\geq P_2$ 
if and only if $P_1$ is a refinement of $P_2$.

Now consider the case $D=\{1\}$: The main difference to the discussion above is that whenever a summand of $kB$ is split by $\nu$ into
its $q$ maximal proper subballs, we have to remember which summands of $nB$ correspond to which of the $q$ maximal proper subballs.
In the point of view of partitions from above, we can model this information by decorating each subset of cardinality $q$ in a partition
of ${\bf n}$ representing an object in $lk^*_\downarrow(X)$ with an element in $\operatorname{Sym}(q)$. In the general
case where $D$ is an arbitrary subgroup of $\operatorname{Sym}(q)$, the decorations are elements in the set of cosets
$\operatorname{Sym}(q)/D$.

We can simplify the description even more: The geometric realization of this poset is the barycentric subdivision of the following flag complex
$\calC_n$: As vertices we have subsets $x$ of ${\bf n}$ of cardinality $q$ together with a decoration in $\operatorname{Sym}(q)/D$.
We join two such vertices 
$x_1,x_2$ by an edge if and only if they are disjoint as subsets of ${\bf n}$. The following proposition concludes the proof of the
connectivity claim.

\begin{prop}
	A lower bound for the connectivity of $\calC_n$ is given by the formula
	\[\nu(n)=\left\lfloor\frac{n-q}{2q-1}\right\rfloor-1\]
\end{prop}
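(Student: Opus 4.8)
The plan is to forget the group theory entirely and argue purely about the flag complex $\calC_n$, by induction on the size $n$ of the ground set $\mathbf{n}=\{1,\dots,n\}$. The first observation is that the decorations play no role in the combinatorics of adjacency: the edge relation sees only the underlying $q$-subsets, so a decorated vertex is really a $q$-subset fattened into an independent family of $|\operatorname{Sym}(q)/D|$ parallel vertices. Consequently every construction below is carried out on the level of supports, and the decorations are simply transported along; they enlarge the number of vertices (hence the number of wedge summands that appear) but never change a connectivity estimate. The base cases are the values of $n$ for which $\nu(n)\le -1$: there $\calC_n$ is either empty (when $n<q$) or merely required to be non-empty, both of which are immediate.

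For the inductive step I single out the top element $n\in\mathbf{n}$. The vertices whose support contains $n$ form an independent set, since any two of them share $n$ and so are never joined by an edge. Writing $\calC_{n-1}$ for the full subcomplex on the vertices whose support avoids $n$ — which is exactly a copy of the complex on a ground set of size $n-1$ — one gets $\calC_n=\calC_{n-1}\cup\bigcup\overline{\operatorname{st}}(x,d)$, the union running over the vertices $(x,d)$ with $n\in x$, each closed star meeting $\calC_{n-1}$ precisely in its link. The link of such a vertex is the full subcomplex on the supports disjoint from $x$, i.e.\ a copy of $\calC_{n-q}$ on the ground set $\mathbf{n}\setminus x$. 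Since the stars are cones and meet one another only inside $\calC_{n-1}$, collapsing $\calC_{n-1}$ yields a wedge of suspensions, $\calC_n/\calC_{n-1}\simeq\bigvee\Sigma\,\calC_{n-q}$. By the inductive hypothesis $\calC_{n-q}$ is $\nu(n-q)$-connected, so the pair $(\calC_n,\calC_{n-1})$ is $(\nu(n-q)+1)$-connected. Equivalently, this is the discrete Morse argument of Subsection~\ref{sub:conn_desc_link} applied to $\calC_n$ with the Morse function sending a vertex to the largest element of its support: the descending link of a vertex with top element $m$ is a copy of $\calC_{m-q}$.

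Feeding this into the long exact sequence of the pair, together with the inductive bound that $\calC_{n-1}$ is $\nu(n-1)$-connected, shows that $\pi_j(\calC_n)=0$ for $j\le\min\{\nu(n-1),\,\nu(n-q)+1\}$. A short computation with the floor function gives $\nu(n-q)+1\ge\nu(n)$ for all $n$, so whenever $\nu(n-1)=\nu(n)$ — that is, for every $n$ except those at which $\lfloor(n-q)/(2q-1)\rfloor$ jumps — the desired bound $\nu(n)$ drops out at once.

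The main obstacle is precisely these remaining \emph{jump values} of $n$, where $\nu(n-1)=\nu(n)-1$ and the subcomplex $\calC_{n-1}$ is one degree short of what is needed. There one must show that the potentially nonzero class in $\pi_{\nu(n)}(\calC_{n-1})$ already dies in $\calC_n$, equivalently that the boundary map $\pi_{\nu(n)+1}(\calC_n,\calC_{n-1})\to\pi_{\nu(n)}(\calC_{n-1})$ is surjective. This is the genuine combinatorial heart of the statement, and it is exactly the point that forces the denominator $2q-1$ rather than $q$: one unit of connectivity is gained only after $2q-1$ new ground elements, reflecting that a $k$-dimensional spherical cycle is assembled from $k+1$ pairwise disjoint $q$-sets but can be filled only once a second, essentially disjoint, family of $q$-sets is available to contract it. I expect to resolve it by strengthening the inductive hypothesis — carrying enough control of the top-dimensional cycles (or analysing the attaching maps of the stars directly), in the spirit of the classical connectivity estimates for matching and chessboard complexes — so that surjectivity of the boundary map at the jump values becomes part of the induction rather than an afterthought.
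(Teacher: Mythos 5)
Your reduction to the pair $(\calC_n,\calC_{n-1})$ is sound as far as it goes, and your diagnosis of where it breaks is accurate: at the jump values of $n$ (those with $(n-q)\equiv 0 \bmod (2q-1)$) the subcomplex $\calC_{n-1}$ is only $\nu(n-1)=\nu(n)-1$ connected, so the long exact sequence leaves a potentially nonzero class in $\pi_{\nu(n)}(\calC_n)$ coming from $\pi_{\nu(n)}(\calC_{n-1})$. But the proposal then stops: showing that the boundary map $\pi_{\nu(n)+1}(\calC_n,\calC_{n-1})\to\pi_{\nu(n)}(\calC_{n-1})$ is surjective at these values is announced as something you ``expect to resolve'' by strengthening the induction, and no strengthened hypothesis or argument is given. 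Since these jump values are exactly where the stated bound improves, this is not a technical loose end but the entire content of the induction step at infinitely many $n$; as written, the proof establishes only the weaker bound $\min\{\nu(n-1),\nu(n-q)+1\}$.

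The missing idea, and the way the paper closes precisely this hole, is to induct against a different subcomplex: fix the vertex $b=\{1,\dots,q\}$ and let $\calC'_n$ be the full subcomplex on the vertices \emph{disjoint from} $b$, rather than the vertices avoiding the ground element $n$. This choice has two payoffs. First, a Morse function recording $a\cap b$ (a $q$-digit binary number) builds $\calC_n$ from $\calC'_n$ with descending links isomorphic to $\calC_k$, $k\ge n-(2q-1)$, so by induction the pair $(\calC_n,\calC'_n)$ is $\nu(n-(2q-1))+1=\nu(n)$-connected. Second, and crucially, every vertex of $\calC'_n$ is by construction adjacent to $b$, so $\calC'_n\subset\operatorname{star}(b)$ and the inclusion $\calC'_n\hookrightarrow\calC_n$ is null-homotopic. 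The long exact sequence then gives that $\pi_m(\calC'_n)\to\pi_m(\calC_n)$ is surjective for $m\le\nu(n)$ and simultaneously the zero map, forcing $\pi_m(\calC_n)=0$ \emph{without ever needing any connectivity of the subcomplex itself}. Your decomposition cannot exploit this trick, because $\calC_{n-1}$ is not contained in the star of any single vertex; that is exactly why your induction gets stuck on the connectivity of $\calC_{n-1}$ while the paper's does not. If you want to salvage your route, replacing ``delete a ground element'' by ``delete the non-neighbors of a fixed vertex'' is the one-line change that makes everything go through.
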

\begin{proof}
	This is a special case of \cite{op}*{Theorem 4.7}: Consider a single color $\ast$ and for each element in $\operatorname{Sym}(q)/D$ an
	archetype of length $q$. Then we have 
	\[\calC_n=\mathcal{AC}_3\big(\{\ast\},\operatorname{Sym}(q)/D;{\bf n}\big)\]
	Since $m_a=q=m_r$, the formula follows.
	
	We provide the proof for the convenience of the reader.
	It is an induction over $n$. The induction start is $n\geq q$ because $q$ is the first number such that $\nu(q)\geq-1$.
	Indeed, it is obvious that $\calC_n$ is not empty in the case $n\geq q$.
	
	For the induction step, let $b$ be the vertex $\{1,\ldots,q\}\subset{\bf n}$ with arbitrary decoration. Consider the full
	subcomplex $\calC'_n$ spanned by the vertices which are disjoint from $b$. The first step is to estimate the connectivity of the
	pair $(\calC_n,\calC'_n)$ using the discrete Morse technique for simplicial complexes. Let $a$ be a vertex in $\calC_n\setminus\calC'_n$. In other
	words, $a$ intersects $b$ non-trivially. Consider the binary number $f(a)$ with $q$ digits such that the $i$'th digit
	is $0$ whenever $i\not\in a$ and $1$ whenever $i\in a$. When ordering this binary numbers in the natural way, $f$ becomes a Morse
	function building up $\calC_n$ from $\calC'_n$.
	
	We need to estimate the connectivity of the descending link $lk_\downarrow(a)$. It is the full subcomplex of $\calC_n$ spanned
	by the vertices $x$ which are disjoint from $a$ and such that $\min a<\min x$. Thus we see that it is isomorphic to the complex
	$\calC_k$ with 
	\begin{equation*}
		k = n-q-(\min a-1)\geq n-q-q+1= n-(2q-1).
	\end{equation*}
	It follows, by induction, that $lk_\downarrow(a)$ is $\nu\big(n-(2q-1)\big)$-connected. Hence, the connectivity of the pair $(\calC_n,\calC'_n)$ is
	$\nu\big(n-(2q-1)\big)+1=\nu(n)$.
	
	In the last step, we observe that the inclusion $\iota\colon\calC'_n\rightarrow\calC_n$ induces the trivial map in $\pi_m$ for
	$m\leq\nu(n)$. It then follows from the long exact homotopy sequence that $\calC_n$ is $\nu(n)$-connected. So let 
	$\varphi\colon S^m\rightarrow \calC'_n$ be a map which we can assume to be simplicial by simplicial approximation.
	Then we have $\im(\iota\circ\varphi)\subset\operatorname{star}(b)$ and $\iota\circ\varphi$ can be homotoped within the star of
	$b$ to the constant map with value the vertex $b$.
\end{proof}

\subsection{Conclusion of the proof of Theorem~\ref{thm: main}}\label{subsec: conclusion}

Consider the contractible proper smooth $\calA^D_{qr}$-CW-complex $\calQ$ (Corollary \ref{cor:contr} and Proposition \ref{prop:contr}).
It is not yet of finite type. However, we claim that the filtration $\calQ(k)$ is of finite type and 
$\compactopen$-highly connected where $\compactopen$ is the family of compact open subgroups of $\calA^D_{qr}$. We can then apply 
Theorem \ref{thm:cell_trading_filtration} and the proof is complete.
	
That each $\calQ(k)$ is of finite type follows from Proposition \ref{prop:finitetype}.
For the second statement, observe that whenever $X$ is an object in $\calQ^H$ (compare with Remark
\ref{rem:fixedpointset}) with $H\in\compactopen$ and $Y\in\calQ$ with $X\rightarrow Y$, then also $Y\in\calQ^H$: Because let $X=[\phi]$, then
for each $\gamma\in H$ the map $\phi^{-1}\circ\gamma\circ\phi$ is a strict transformation. If $Y=[\psi]$ then there is a merge map
$\mu$ with $\psi\circ\mu=\phi$. Since for each strict transformation $\nu$ such that $\mu\circ\nu$ is defined we have that
$\mu\circ\nu\circ\mu^{-1}$ is again a strict transformation, we see that $\psi^{-1}\circ\gamma\circ\psi$ is a strict transformation
for every $\gamma\in H$. In other words, $Y$ is fixed by $H$. 
	
It follows from the above observation that a descending link $lk_\downarrow^H(X)$ with respect to the filtration
$\calQ(k)^H$ of $\calQ^H$ is equal to the descending link $lk_\downarrow(X)$ with respect to the filtration $\calQ(k)$
of $\calQ$.
In Subsection \ref{sub:conn_desc_link} we have shown that the latter are highly connected. Hence also the descending links
$lk_\downarrow^H(X)$ are highly connected, uniformly in $H$. This implies that the filtration $\calQ(k)$ is $\compactopen$-highly connected 
and finishes the proof of Theorem~\ref{thm: main}.

\begin{bibdiv}
\begin{biblist}

\bib{at}{article}{
   author={Abels, H.},
   author={Tiemeyer, A.},
   title={Compactness properties of locally compact groups},
   journal={Transform. Groups},
   volume={2},
   date={1997},
   number={2},
   pages={119--135}
}

\bib{simple}{article}{
   author={Bader, Uri},
   author={Caprace, Pierre-Emmanuel},
   author={Gelander, Tsachik},
   author={Mozes, Shahar},
   title={Simple groups without lattices},
   journal={Bull. Lond. Math. Soc.},
   volume={44},
   date={2012},
   number={1},
   pages={55--67}
}

\bib{bredon}{book}{
   author={Bredon, Glen E.},
   title={Topology and geometry},
   series={Graduate Texts in Mathematics},
   volume={139},
   note={Corrected third printing of the 1993 original},
   publisher={Springer-Verlag, New York},
   date={1997}
}

\bib{bridson}{book}{
   author={Bridson, Martin R.},
   author={Haefliger, Andr{\'e}},
   title={Metric spaces of non-positive curvature},
   series={Grundlehren der Mathematischen Wissenschaften [Fundamental
   Principles of Mathematical Sciences]},
   volume={319},
   publisher={Springer-Verlag, Berlin},
   date={1999}
}

\bib{brown}{book}{
   author={Brown, Kenneth S.},
   title={Cohomology of groups},
   series={Graduate Texts in Mathematics},
   volume={87},
   note={Corrected reprint of the 1982 original},
   publisher={Springer-Verlag, New York},
   date={1994}
}
	
\bib{brown-finiteness}{article}{
   author={Brown, Kenneth S.},
   title={Finiteness properties of groups},
   booktitle={Proceedings of the Northwestern conference on cohomology of
   groups (Evanston, Ill., 1985)},
   journal={J. Pure Appl. Algebra},
   volume={44},
   date={1987},
   number={1-3},
   pages={45--75}
}

\bib{brown+geoghegan}{article}{
   author={Brown, Kenneth S.},
   author={Geoghegan, Ross},
   title={An infinite-dimensional torsion-free ${\rm FP}_{\infty }$ group},
   journal={Invent. Math.},
   volume={77},
   date={1984},
   number={2},
   pages={367--381}
}

\bib{caprace}{article}{
   author={Caprace, Pierre-Emmanuel},
   author={De Medts, Tom},
   title={Simple locally compact groups acting on trees and their germs of
   automorphisms},
   journal={Transform. Groups},
   volume={16},
   date={2011},
   number={2},
   pages={375--411}
}

\bib{weigel}{article}{
   author = {Castallano, Ilaria},
   author = {Weigel, Thomas},
   title = {Rational discrete cohomology for totally disconnected locally compact groups},
   note = {ArXiv e-prints 1503.02436},
   year = {2015}
}

\bib{cornulier-delaharpe}{book}{
	author= {de Cornulier, Yves},
	author = {de la Harpe, Pierre}, 
	title = {Metric geometry of locally compact groups}, 
	note = {ArXiv e-prints 1403.3796}, 
	year = {2015}
}

\bib{geo}{book}{
   author={Geoghegan, Ross},
   title={Topological methods in group theory},
   series={Graduate Texts in Mathematics},
   volume={243},
   publisher={Springer, New York},
   date={2008}
}

\bib{hughes}{article}{
   author={Hughes, Bruce},
   title={Trees and ultrametric spaces: a categorical equivalence},
   journal={Adv. Math.},
   volume={189},
   date={2004},
   number={1},
   pages={148--191}
}

\bib{kapoudjian-moduli}{article}{
   author={Kapoudjian, Christophe},
   title={From symmetries of the modular tower of genus zero real stable
   curves to a Euler class for the dyadic circle},
   journal={Compositio Math.},
   volume={137},
   date={2003},
   number={1},
   pages={49--73}
}

\bib{kapoudjian}{article}{
   author={Kapoudjian, Christophe},
   title={Simplicity of Neretin's group of spheromorphisms},
   journal={Ann. Inst. Fourier (Grenoble)},
   volume={49},
   date={1999},
   number={4},
   pages={1225--1240}
}
	
\bib{kneser}{article}{
   author={Kneser, Martin},
   title={Erzeugende und Relationen verallgemeinerter Einheitengruppen},
   language={German},
   journal={J. Reine Angew. Math.},
   volume={214/215},
   date={1964},
   pages={345--349}
}

\bib{boudec}{article}{
   author={Le Boudec, Adrien},
   title={Compact presentability of tree almost automorphism groups},
   note={ArXiv e-prints 1402.5652},
   date={2014}
}

\bib{lueck-survey}{article}{
   author={L{\"u}ck, Wolfgang},
   title={Survey on classifying spaces for families of subgroups},
   conference={
      title={Infinite groups: geometric, combinatorial and dynamical
      aspects},
   },
   book={
      series={Progr. Math.},
      volume={248},
      publisher={Birkh\"auser, Basel},
   },
   date={2005},
   pages={269--322}
}

\bib{lueck-type}{article}{
   author={L{\"u}ck, Wolfgang},
   title={The type of the classifying space for a family of subgroups},
   journal={J. Pure Appl. Algebra},
   volume={149},
   date={2000},
   number={2},
   pages={177--203}
}
 
\bib{lueck-lecturenotes}{book}{
   author={L{\"u}ck, Wolfgang},
   title={Transformation groups and algebraic $K$-theory},
   series={Lecture Notes in Mathematics},
   volume={1408},
   note={Mathematica Gottingensis},
   publisher={Springer-Verlag, Berlin},
   date={1989}
}

\bib{steenrod}{article}{
   author={Steenrod, N. E.},
   title={A convenient category of topological spaces},
   journal={Michigan Math. J.},
   volume={14},
   date={1967},
   pages={133--152}
}

\bib{stein}{article}{
   author={Stein, Melanie},
   title={Groups of piecewise linear homeomorphisms},
   journal={Trans. Amer. Math. Soc.},
   volume={332},
   date={1992},
   number={2},
   pages={477--514}
}

\bib{op}{article}{
   author={Thumann, Werner},
   title={Operad groups and their finiteness properties},
   note={ArXiv e-prints 1409.1085},
   date={2015}
}

\bib{dieck}{book}{
   author={tom Dieck, Tammo},
   title={Transformation groups},
   series={de Gruyter Studies in Mathematics},
   volume={8},
   publisher={Walter de Gruyter \& Co.},
   place={Berlin},
   date={1987}
}

\bib{waner}{article}{
   author={Waner, Stefan},
   title={Equivariant homotopy theory and Milnor's theorem},
   journal={Trans. Amer. Math. Soc.},
   volume={258},
   date={1980},
   number={2},
   pages={351--368}
 }

\end{biblist}
\end{bibdiv}

\end{document}